\theoremstyle{thmstyleone}
\newtheorem{theorem}{Theorem}
\newtheorem{lemma}[theorem]{Lemma}
\theoremstyle{thmstyletwo}%
\newtheorem{remark}{Remark}%
\theoremstyle{thmstylethree}%
\newcommand{\eps}{\varepsilon}
\newcommand{\bx}{{\bf x}}
\newcommand{\nn}{\nonumber}
\numberwithin{equation}{section}
\numberwithin{theorem}{section}
\numberwithin{remark}{section}
\numberwithin{table}{section}
\numberwithin{figure}{section}
\begin{document}

\title[Improved uniform error bounds on LEI-FP for SGE]{Improved uniform error bounds on a Lawson-type exponential integrator for the long-time dynamics of sine--Gordon equation}

\author[1]{\fnm{Yue} \sur{Feng}}\email{yue.feng@sorbonne-universite.fr}

\author[1]{\fnm{Katharina} \sur{Schratz}}\email{katharina.schratz@sorbonne-universite.fr}

\affil[1]{\orgdiv{Laboratoire Jacques-Louis Lions}, \orgname{Sorbonne Universit\'e}, \orgaddress{\street{4 place Jussieu}, \city{Paris}, \postcode{75007},  \country{France}}}


\abstract{We establish the improved uniform error bounds on a Lawson-type exponential integrator Fourier pseudospectral (LEI-FP) method for the long-time dynamics of sine--Gordon equation where the amplitude of the initial data is $O(\eps)$ with $0 < \eps \ll 1$ a dimensionless parameter up to the time at $O(1/\eps^2)$. The numerical scheme combines a Lawson-type exponential integrator in time with a Fourier pseudospectral method for spatial discretization, which is fully explicit and efficient in practical computation thanks to the fast Fourier transform. By separating the linear part from the sine function and employing the regularity compensation oscillation (RCO) technique which is introduced to deal with the polynomial nonlinearity by phase cancellation, we carry out the improved error bounds for the semi-discreization at $O(\eps^2\tau)$ instead of $O(\tau)$ according to classical error estimates and at $O(h^m+\eps^2\tau)$ for the full-discretization up to the time $T_{\eps} = T/\eps^2$ with $T>0$ fixed. This is the first work to establish the improved uniform error bound for the long-time dynamics of the NKGE with non-polynomial nonlinearity. The improved error bound is extended to an  oscillatory sine--Gordon equation with $O(\eps^2)$ wavelength in time and $O(\eps^{-2})$ wave speed, which indicates that the temporal error is independent of $\eps$ when the time step size is chosen as $O(\eps^2)$. Finally, numerical examples are shown to confirm the improved error bounds and to demonstrate that they are sharp.}

\keywords{sine--Gordon equation, long-time dynamics, Lawson-type exponential integrator, improved error bounds, regularity compensation oscillation}


\pacs[MSC Classification]{35L70, 65M12, 65M15, 65M70, 81-08}

\maketitle

\section{Introduction}
\label{sec:int}
In this paper, we consider the following sine--Gordon equation (SGE)
\begin{equation}
\label{eq:SIE}
\begin{cases}
\partial_{tt}u(\bx, t)-\Delta u({\bx}, t)+ \sin(u({\bx}, t))=0,\quad \bx \in \Omega,\quad t > 0,\\
u(\bx, 0) = \eps \phi(\bx) = O(\eps), \quad \partial_t u(\bx, 0) = \eps \gamma(\bx)=O(\eps),\quad {\bx} \in \Omega,
\end{cases}
\end{equation}
where $t$ is time, $\bx$ is the spatial coordinate, $\Delta$ is the Laplace operator, $u:= u(\bx, t)$ is a real-valued scalar field, $\varepsilon\in (0, 1]$ is a dimensionless parameter used to characterize the amplitude of the initial data and $\Omega =  \prod_{i = 1}^{d} (a_i, b_i) \subset \mathbb{R}^d$ $(d = 1, 2, 3)$ is a bounded domain with periodic boundary condition. In the initial data, $\phi(\bx)$ and $\gamma(\bx)$ are two given real-valued functions which are independent of $\eps$. 

The sine--Gordon equation is a special case of the nonlinear Klein--Gordon equation (NKGE) and used to describe ubiquitous phenomena in many fields. It arises in the propagation of fluxion in Josephson junctions between two superconductors, dislocations in crystals, vibrations of DNA molecules, laser pulses, and quantum field theory, etc \cite{CLL,Josephson,SCR,XJX,YLV}. 
A remarkable property of the sine--Gordon \eqref{eq:SIE} is the conservation of the energy as
\begin{align}
E(t) &:= E(u(\cdot, t)) = \int_{\Omega}\Big[\vert\partial_t u({\bx}, t)\vert^2 + \vert\nabla u({\bx}, t)\vert^2 + 2(1-\cos(u({\bx}, t)))\Big] d{\bx} \nn \\
& \equiv  \int_{\Omega}\Big[\eps^2\vert\gamma({\bx})\vert^2 + \eps^2\vert\nabla \phi({\bx})\vert^2 + 2(1-\cos(\eps \phi({\bx})))\Big] d{\bx} \nn \\
& = E(0)=O(\eps^2), \quad t \geq 0.
\end{align}

In the past decades, a surge of analytical and numerical results for the sine--Gordon equation have been reported in the literature. In the analytical aspect, the soliton solutions of the sine--Gordon equation were well studied \cite{ADM,Deh,Hirota,Lei,VNK,Wazwaz,ZJ}. Along the numerical front, various numerical schemes were proposed and analyzed including the finite difference method \cite{GPRV,FV,MD}, finite element method \cite{AHH,Tou}, spectral method \cite{BD,CG} and Adomian's decomposition method \cite{DK,Kaya}, etc. For more details, we refer to \cite{AHS,BD,Duncan,KL,LV,SKV} and references therein.

In order to study the long-time asymptotics of the sine--Gordon equation with small norm solutions \cite{CLL}, one can use the Taylor expansion $\sin(u) = u - \frac{u^3}{6}+O(u^5)$. The leading order behavior of the solution is given by 
\begin{equation}
\partial_{tt} u(\bx, t) - \Delta u({\bx}, t) +  u({\bx}, t) - \frac{ u^3({\bx}, t)}{6} = 0,
\end{equation}
which implies the lifespan of the sine--Gordon equation \eqref{eq:SIE} is at least up to $O(1/\eps^2)$ based on the results for the NKGE with cubic nonlinearity \cite{DS,FZ}.

When $0 < \eps \ll 1$, by introducing $w(\bx, t)=u(\bx, t)/\eps$, the sine--Gordon equation  \eqref{eq:SIE} with $O(\eps)$ initial data and $O(1)$ nonlinearity can be reformulated into the following SGE
\begin{equation}\label{eq:WNE}
\begin{cases}
\partial_{tt} w({\bx}, t)-\Delta w({\bx}, t)+ \frac{1}{\eps}\sin(\eps w({\bx}, t)) = 0, \quad \bx \in \Omega,\quad t > 0, \\
w({\bx}, 0) = \phi({\bx}) = O(1),\quad \partial_t w({\bx}, 0) = \gamma({\bx}) = O(1),\quad {\bx} \in \Omega.
\end{cases}
\end{equation}
In fact, the long-time dynamics of the sine--Gordon equation \eqref {eq:WNE} is equivalent to that of the sine--Gordon equation \eqref{eq:SIE}.

In addition, introducing a re-scale in time
\begin{equation}
\label{rstime}
t=\frac{s}{\varepsilon^2}\Leftrightarrow s=\varepsilon^2 t, \qquad v(\bx,s) = w(\bx,t),
\end{equation}
we can re-formulate the sine--Gordon equation \eqref{eq:WNE} into the following oscillatory sine--Gordon equation
\begin{equation}
\left\{
\begin{split}
&\eps^4\partial_{ss} v(\bx, s) -\Delta v(\bx, s) + \frac{1}{\eps}\sin(\eps v(\bx, s))=0,\quad \bx \in \Omega,\quad s > 0,\\
&v(\bx, 0) = \phi(\bx)=O(1),\quad \partial_s v(\bx, 0) = \frac{1}{\eps^2}\gamma(\bx)=O({\eps}^{-2}),\quad \bx \in \Omega.
\end{split}\right.
\label{eq:HOE}
\end{equation}
\begin{figure}[ht!]
\begin{minipage}{0.49\textwidth}
\centerline{\includegraphics[width=6.5cm,height=5.5cm]{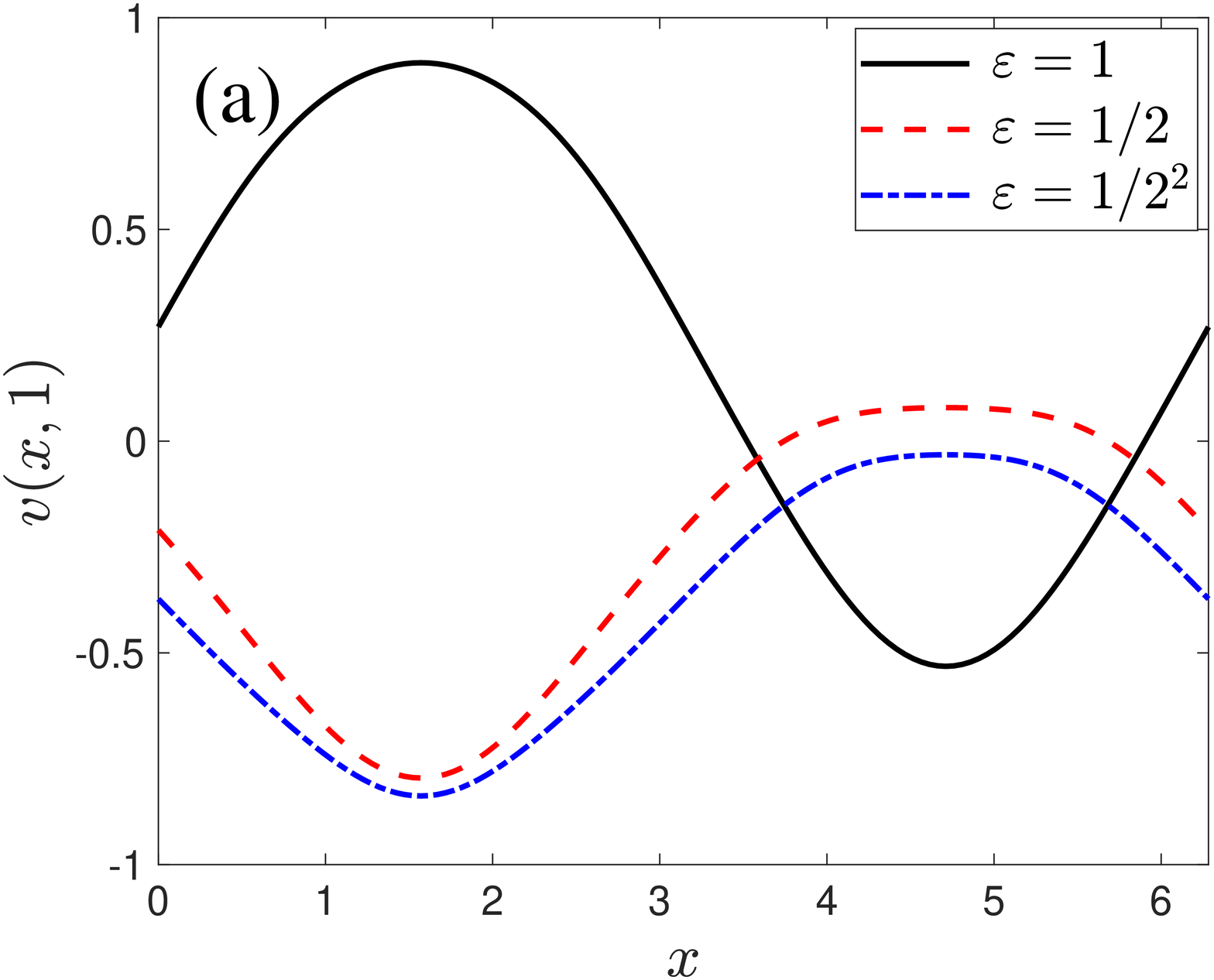}}
\end{minipage}
\begin{minipage}{0.49\textwidth}
\centerline{\includegraphics[width=6.5cm,height=5.5cm]{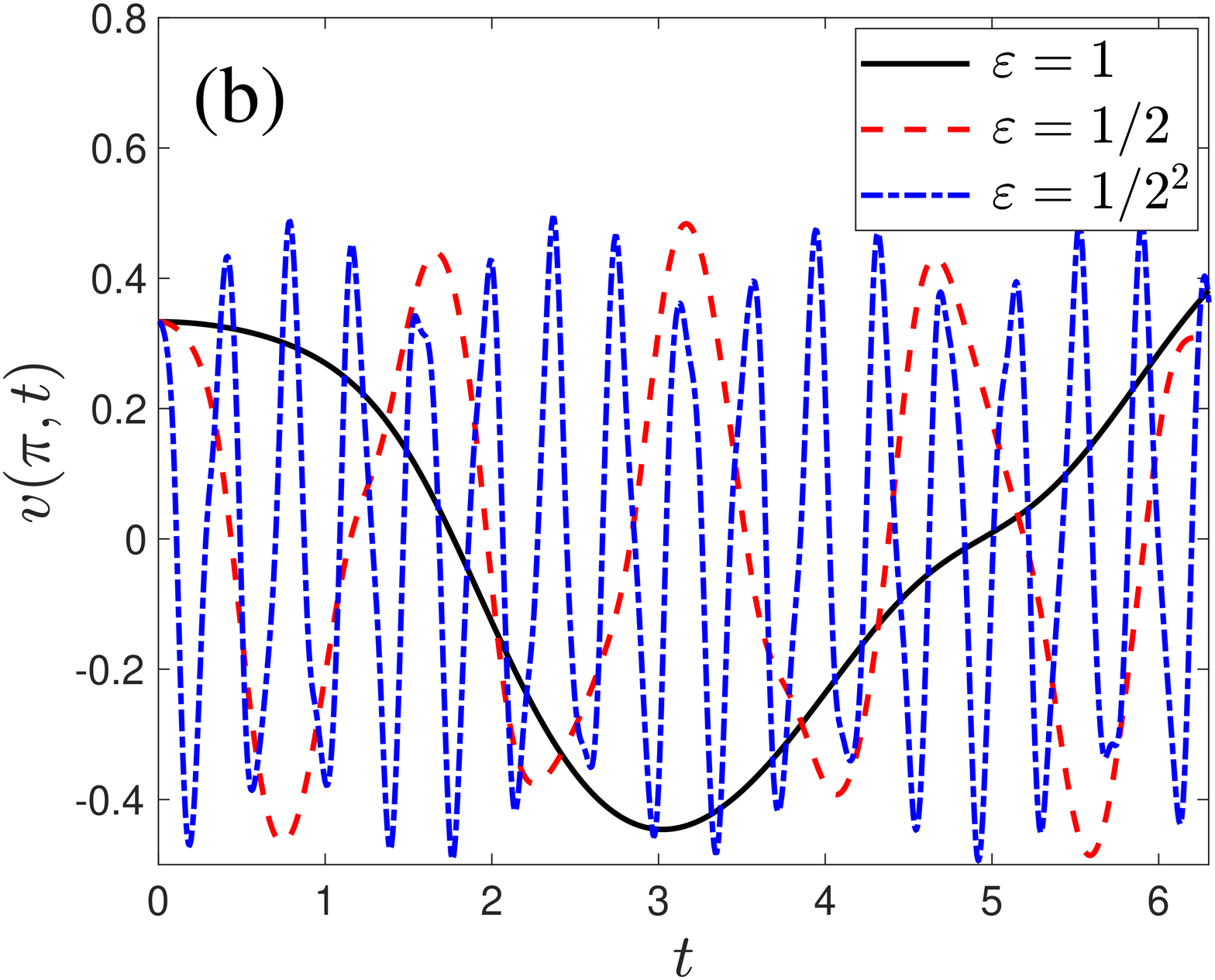}}
\end{minipage}
\caption{The solution $v(x, 1)$ and $v(\pi, s)$ of the oscillatory SGE \eqref{eq:HOE} in 1D with different $\eps$.}
\label{fig:1D_HOE}
\end{figure}
The solution of the oscillatory SGE \eqref{eq:HOE} propagates waves with amplitude at $O(1)$, wavelength at $O(1)$ and $O(\eps^2)$ in space and time, respectively, and wave speed at $O(\eps^{-2})$. Fig. \ref{fig:1D_HOE} shows the oscillatory nature of the SGE \eqref{eq:HOE}.

In our recent work, long-time error bounds for the NKGE with weak nonlinearity have been rigorously established for finite difference time domain (FDTD) methods \cite{BFY}, exponential wave integrator Fourier pseudospectral (EWI-FP) method \cite{FY}, and time-splitting Fourier pseudospectral (TSFP)  method \cite{BCF,BaoFS}. For the time-splitting methods applied to the NKGE with power-type nonlinearity, the improved uniform error bounds were carried out by introducing the regularity compensation oscillation (RCO) technique which controls high frequency modes by the regularity of the exact solution and low frequency modes by phase cancellation and energy method. Later, the RCO technique is extended to establish the improved error bounds for the long-time dynamics of the time-splitting methods for the (nonlinear) Schr\"odinger equation and (nonlinear) Dirac equation \cite{BCF2, BFYIN}. However, the new analysis technique takes the advantage of the polynomial nonlinearity and it can not be directly extended to deal with the non-polynomial nonlinearity. More specifically, we no longer just summate the local truncation error in each time step, which can establish the uniform error bound in the long-time regime. Instead, we consider the global error and exploit the structure of the polynomial nonlinearity to get the phase cancellation, which leads to the improved uniform error bound. As far as we know, improved uniform error bounds have not been proven on the exponential integrator for the long-time dynamics for the NKGE, especially that with non-polynomial nonlinearity. 

The aim of this paper is to rigorously carry out the improved uniform error bounds on the Lawson-type exponential integrator for the long-time dynamics of the SGE \eqref{eq:SIE} or \eqref{eq:WNE} with the aid of the RCO technique up to the time at $T_{\eps} = T/\eps^2$. It is important to mention that we need to separate a linear part from the sine function of the SGE \eqref{eq:WNE} in the numerical scheme, otherwise we can not obtain the improved uniform error bounds for the long-time dynamics. 

The rest of this paper is organized as follows. In section 2, we separate a linear part from the sine function and reformulate it into a relativistic nonlinear Schr\"odinger equation (NLSE) and then apply a Lawson-type exponential integrator to discretize the SGE in time followed by the full-discretization with the Fourier pseudospectral method for spacial discretization. In section 3, we exploit the regularity compensation oscillation (RCO) technique to establish the improved uniform error bounds for the semi-discretization and full-discretization up to the time at $O(1/\eps^2)$. In section 4, numerical results for the long-time dynamics of the SGE \eqref{eq:WNE} and the dynamics of the oscillatory SGE \eqref{eq:HOE} are presented to confirm the error estimates. Finally, some conclusions are drawn in section 5. Throughout this paper, the notation $A \lesssim  B$ is used to represent that there exists a generic constant $C > 0$ independent of the mesh size $h$, time step $\tau$, $\eps$, and $\tau_0$ such that $\vert A\vert \leq CB$.

\section{A Lawson-type exponential integrator}
In this section, we first separate a linear part from the sine function of the SGE \eqref{eq:WNE} and reformulate it into a relativistic NLSE. For the relativistic NLSE, we discretize it in time by a Lawson-type exponential integrator followed by a full-discretization with the Fourier pseudospectral method in space. Since the long-time dynamics of the SGE \eqref {eq:SIE} is equivalent to that of the SGE \eqref{eq:WNE}, we only show the numerical method and corresponding analysis for the SGE \eqref{eq:WNE} in one dimension (1D) for simplicity of presentation. Generalization to higher dimensions and/or the SGE \eqref{eq:SIE} is straightforward.  In 1D, the SGE \eqref{eq:WNE} on the computational domain $\Omega = (a, b)$ is given as
\begin{equation}
\label{eq:21}
\left\{
\begin{aligned}
&\partial_{tt} w(x, t) - \partial_{xx} w(x, t) +  \frac{1}{\eps}\sin(\eps w(x, t)) = 0,\ x \in \Omega,\ t > 0, \\
&w(a, t) = w(b, t), \quad \partial_x w(a, t) = \partial_x w(b, t), \qquad
t\ge0,\\
&w(x, 0) = \phi(x), \quad \partial_t w(x, 0) = \gamma(x) , \quad x \in \overline{\Omega} = [a, b].
\end{aligned}\right.
\end{equation}

For an integer $m\ge 0$, we denote by $H^m(\Omega)$ the space of functions $u(x)\in L^2(\Omega)$ with finite $H^m$-norm $\|\cdot\|_{H^m}$ given by
\begin{equation}
\label{sn}
\|u\|_{H^m}^2=\sum\limits_{l \in \mathbb{Z}} (1+\mu_l^2)^m\vert\widehat{u}_l\vert^2,\quad \mathrm{for}\ u(x)=\sum\limits_{l\in \mathbb{Z}} \widehat{u}_l e^{i\mu_l(x-a)},\ \mu_l=\frac{2\pi l}{b - a},
\end{equation}
where $\widehat{u}_l \ (l\in \mathbb{Z})$ are the Fourier  coefficients  of the function $u(x)$ \cite{BCZ,BaoFS}. In fact, the  space $H^m(\Omega)$ is the subspace of  classical Sobolev space $W^{m,2}(\Omega)$, which consists of functions with derivatives of order up to $m-1$ being $(b - a)$-periodic. Since we consider the SGE with periodic boundary condition, the above space $H^m(\Omega)$ is suitable. 

Define the operator
\begin{equation}
\langle \nabla \rangle=\sqrt{1-\Delta},
\end{equation}
through its action in the Fourier space by \cite{BaoFS,BFS,FS}:
\begin{equation*}
\langle \nabla \rangle u(x)=\sum\limits_{l\in\mathbb{Z}}\sqrt{1+\mu_l^2}\widehat{u}_l e^{i\mu_l(x-a)}, \quad \mathrm{for}\quad u(x)=\sum\limits_{l\in\mathbb{Z}} \widehat{u}_l e^{i\mu_l(x-a)},\quad x\in[a,b],
\end{equation*}
and the operator $ \langle \nabla \rangle^{-1}$  as
\begin{equation*}
\langle \nabla \rangle^{-1} u(x)=\sum\limits_{l\in\mathbb{Z}}\frac{\widehat{u}_l}{\sqrt{1+\mu_l^2}} e^{i\mu_l(x-a)},\qquad x\in [a, b].
\end{equation*}
With this notation, by separating a linear part $w$ from $\frac{1}{\eps}\sin(\eps w(x, t))$ in the SGE \eqref{eq:21}, we can write it as
\begin{equation}
\partial_{tt} u(x, t) + \langle\nabla \rangle^2 u(x, t) + \frac{1}{\eps}\sin(\eps w(x, t)) - w(x, t) = 0,\quad x \in\Omega,\quad t>0.
\label{eq:R1}
\end{equation}

Denoting $z(x, t) = \partial_t u(x, t)$ and 
\begin{equation}
\psi(x, t) = u(x, t) - i\langle\nabla\rangle^{-1}z(x, t), \quad x\in[a,b], \quad t \ge 0,
\label{eq:psi}
\end{equation}
the SGE \eqref{eq:R1} can be reformulated into the following relativistic NLSE
\begin{equation}
\label{eq:NLS}
\left\{
\begin{aligned}
&i\partial_t \psi(x, t) + \langle\nabla \rangle \psi(x, t) + \langle\nabla \rangle^{-1} f\Big(\frac{1}{2} \left(\psi + \overline{\psi}\right)\Big)(x, t) = 0, \\
&\psi(a,t)=\psi(b,t), \quad \partial_x \psi(a,t)=\partial_x \psi(b,t), \quad t\ge0,\\
&\psi(x,0)=\psi_0(x):=\phi(x)-i\langle\nabla \rangle^{-1}\gamma(x), \quad x\in[a,b],
\end{aligned}\right.
\end{equation}
where $f(\phi)=\frac{1}{\eps}\sin(\eps\phi)-\phi$ and $\overline{\psi}$ denotes the complex conjugate of $\psi$. By the definition \eqref{eq:psi}, the solution of the SGE \eqref{eq:21} can be recovered by
\begin{equation}
\label{eq:wz}
w(x,t)=\frac{1}{2}\left(\psi(x,t)+\overline{\psi}(x,t)\right), \qquad
z(x, t)=\frac{i}{2}\langle\nabla \rangle\left(\psi(x,t)-\overline{\psi}(x,t)\right).
\end{equation}

\subsection{Semi-discretizaiton by a Lawson-type exponential integrator}
For  the relativistic NLSE \eqref{eq:NLS}, we utilize a Lawson-type exponential integrator (LEI) to discretize it in time. In the rest of this paper, we take $\psi(t) = \psi(x, t)$ for notational simplicity, i.e., omit the spatial variable when there is no confusion.

Let $\tau > 0$ be the time step size and take $t_n = n\tau$ for $n = 0, 1, \ldots$. By Duhamel's formula, the exact solution of the relativistic NLSE \eqref{eq:NLS} is given as 
\begin{equation}
\psi(t_n+\tau) = e^{i\tau\langle\nabla\rangle}\psi(t_n) + \int^{\tau}_0 e^{i(\tau-\sigma)\langle\nabla\rangle} F(\psi(t_n + \sigma)) d \sigma,
\label{eq:exact}
\end{equation} 
where the function $F$ is defined by
\begin{equation}
F(\phi) = i\langle\nabla\rangle^{-1}g(\phi), \quad g(\phi) = f\left(\frac{1}{2}(\phi+\overline{\phi})\right).
\label{eq:Fg}
\end{equation}
Denote by $\psi^{[n]}:= \psi^{[n]}(x)$ the approximation of $\psi(x, t_n)$ for $n = 0, 1, \ldots$. Applying the approximation $\psi(t_n + \sigma) \approx \psi(t_n)$ and the first-order Lawson method \cite{Lawson, OS}, we get the first-order Lawson-type exponential integrator (LEI) scheme as
\begin{equation}
\psi^{[n+1]} = \mathcal{L}_{\tau}(\psi^{[n]}):= e^{i\tau\langle\nabla\rangle}\psi^{[n]} + \tau e^{i\tau\langle\nabla\rangle} F(\psi^{[n]}),
\label{eq:semi1}
\end{equation} 
with $\psi^{[0]}=\psi_0=\phi-i\langle\nabla\rangle^{-1}\gamma$. Then, the first-order semi-discretization of the SGE \eqref{eq:21} is
\begin{equation}
\label{eq:semi2}
w^{[n]}=\frac{1}{2}\left(\psi^{[n]}+\overline{\psi^{[n]}}\right), \quad
z^{[n]} = \frac{i}{2}\langle\nabla \rangle\left(\psi^{[n]}-\overline{\psi^{[n]}}\right),\quad n=0,  1, \ldots,
\end{equation}
where $w^{[n]}:=w^{[n]}(x)$ and $z^{[n]}:= v^{[n]}(x)$ are the approximations of $w(x, t_n)$ and $\partial_t w(x, t_n)$ for $n=0, 1, \ldots$, respectively.

\subsection{Full-discretization by Fourier pseudospectral method}
Denote the index set $\mathcal{T}_M = \{l~\vert~l = -M/2,-M/2+1, \cdots, M/2-1\}$, and define $C_p(\Omega) = \{u \in C(\Omega) \vert u(a) = u(b)\}$ and
\begin{equation*}
\begin{split}
& X_M := \mbox{span}\{e^{i\mu_l(x-a)},\ \mu_l = \frac{2 \pi l}{b-a}, x \in \overline{\Omega}, l \in \mathcal{T}_M \}, \\
&Y_M := \{u = (u_0, u_1, \cdots, u_M) ~\vert~ u_0 = u_M\}\subseteq  \mathbb{R}^{M+1}.
\end{split}
\end{equation*}
  For any $u(x) \in C_p(\Omega)$ and a vector $u \in Y_M$, let $P_M : L^2(\Omega) \to X_M$ be the standard $L^2$-projection operator onto $X_M$ and $I_M : C_p(\Omega) \to X_M$ or $I_M : Y_M \to X_M$ be the trigonometric interpolation operator \cite{STL}, i.e.,
\begin{equation*}
(P_M u)(x) = \sum_{l \in \mathcal{T}_M} \widehat{u}_l e^{i\mu_l (x-a)}, \quad (I_M u)(x) = \sum_{l \in \mathcal{T}_M} \widetilde{u}_l e^{i\mu_l (x-a)}, 
\end{equation*}
where $\widehat{u}_l $ and $\widetilde{u}_l $ are the Fourier and discrete Fourier transform coefficients, respectively, defined as
\begin{equation*}
\widehat{u}_l = \frac{1}{b-a} \int^{b}_{a} u(x) e^{-i\mu_l (x-a)} dx, \quad \widetilde{u}_l = \frac{1}{M} \sum^{M-1}_{j=0} u_j e^{-i\mu_l (x_j-a)},\quad l \in \mathcal{T}_M,
\end{equation*}
with $u_j$ interpreted as $u(x_j)$ when involved.
Choose the spatial mesh size $h := \Delta x = (b-a)/M$ with $M$ an even positive integer, and denote the grid points as 
\begin{equation*}
x_j := a+j h,\quad j \in \mathcal{T}^0_M = \{ j~\vert~j = 0, 1, \ldots, M\}. 	
\end{equation*}

Let $\psi_j^n$ be the numerical approximation of $\psi(x_j, t_n)$ for $j\in \mathcal{T}^0_M$ and $n \ge 0$, and denote $\psi^n=(\psi_0^n, \psi_1^n,\ldots, \psi_M^n)^T\in \mathbb{C}^{M+1}$ for $n=0,1,\ldots$. Then, the full-discretization for the relativistic NLSE \eqref{eq:NLS} via the Lawson-type exponential integrator for temporal discretization combining with the Fourier pseudospectral method for spacial discretization is given as
\begin{equation}
\psi^{n+1}_j = \sum_{l \in \mathcal{T}_M}\widetilde{\psi}^{n+1}_l e^{i\mu_l(x_j-a)},	\quad j \in \mathcal{T}^0_M, \quad n=0,1,\ldots,
\label{eq:psifull1}
\end{equation}
where
\begin{equation}
\label{eq:psifull2}
\widetilde{\psi}^{n+1}_l= e^{i\tau\delta_l}\widetilde{\psi}^{n}_l+ \tau e^{i\tau\delta_l}(\widetilde{F(\psi^{n})})_l, \quad (\widetilde{F(\psi^{n})})_l = \frac{i}{\delta_l} (\widetilde{g(\psi^{n})})_l, 
\end{equation}
with $\delta_l=\sqrt{1+\mu_l^2}$ for $l\in \mathcal{T}_M$ and
\[\psi_j^0=\phi(x_j)-i\sum_{l \in \mathcal{T}_M}\frac{\widetilde{\gamma}_l}{\sqrt{1+\mu_l^2}} e^{i\mu_l(x_j-a)}, \qquad j \in \mathcal{T}^0_M. \]

Let $w^n_j$ and $z^n_j$ for $j \in \mathcal{T}^0_M$ and $n \geq 0$ be the approximations of $w(x_j, t_n)$ and  $\partial_t w(x_j, t_n)$, respectively, and denote $w^n = (w^M_0, w^n_1, \ldots, w^n_M)^T \in \mathbb{R}^{M+1}$ and $z^n = (z^M_0, z^n_1, \ldots, z^n_M)^T \in \mathbb{R}^{M+1}$. For $j \in \mathcal{T}^0_M$, taking $w^0_j = \phi(x_j)$ and $z^0_j = \gamma(x_j)$, combining \eqref{eq:psi} and \eqref{eq:psifull1}--\eqref{eq:psifull2}, we obtain the full-discretization of the SGE \eqref{eq:21} by the Lawson-type exponential integrator Fourier pseudospectral (LEI-FP) method as
\begin{equation}
\label{eq:wfull}
\begin{split}
&w_j^{n+1}=\frac{1}{2}\left(\psi_j^{n+1}+\overline{\psi_j^{n+1}}\right),\\
&z_j^{n+1} = \frac{i}{2}\sum_{l \in \mathcal{T}_M}\delta_l\big[\widetilde{(\psi^{n+1})}_l-
\widetilde{(\overline{\psi^{n+1}})}_l\big]\;
e^{i\mu_l (x_j-a)},
\end{split}
\qquad j \in \mathcal{T}^0_M, \quad n\ge 0.
\end{equation}

The LEI-FP scheme is explicit and very efficient thanks to the fast discrete Fourier transform. The memory cost is $O(M)$ and the computational cost per time step is $O(M\log M)$.

\section{Improved uniform error estimates}
We make the following assumption on the exact solution $w:=w(x, t)$ of the SGE \eqref{eq:21} up to the time at $T_{\eps} = T/\eps^2$ with $T>0$ fixed:
\begin{equation*}
{\rm(A)}\qquad
\|w\|_{L^{\infty}\left([0, T_{\eps}]; H^{m+1}\right)} \lesssim 1,\quad   \|\partial_t w\|_{L^{\infty}\left([0, T_{\eps}]; H^{m}\right)} \lesssim 1, \quad m \geq 0,
\end{equation*}
then we will establish the improved uniform error bounds for the semi-discretization \eqref{eq:semi1}--\eqref{eq:semi2} and the full-discretization \eqref{eq:psifull1}--\eqref{eq:psifull2} with \eqref{eq:wfull} up to the time $T_{\eps}$, respectively.

\subsection{Main results}
Let $w^{[n]}$ and $z^{[n]}$ be the numerical approximations obtained from the Lawson-type exponential integrator (LEI) \eqref{eq:semi1}--\eqref{eq:semi2}, then we have the following improved uniform error bounds for the semi-discretization \eqref{eq:semi1}--\eqref{eq:semi2}.

\begin{theorem}
\label{thm:semi}
Under the  assumption {\rm (A)}, for $0 < \tau_0 \leq 1$ sufficiently small and independent of $\varepsilon$ such that,
 when $0< \tau < \beta \tau_0 $ for a fixed constant $\beta > 0$, we have the following improved error bound
\begin{equation}
\|w(\cdot, t_n) - w^{[n]}\|_{H^1} + \|\partial_t w(\cdot, t_n) - z^{[n]}\|_{L^2} \lesssim \eps^2\tau + \tau_0^{m+1},\quad 0 \leq n \leq \frac{T/\eps^2}{\tau}.
\label{eq:error_semi}
\end{equation}
 In particular, if the exact solution is sufficiently smooth, e.g., $w, \partial_t w \in H^{\infty}$, the last term $\tau_0^{m+1}$ decays exponentially fast and can be ignored practically for small enough $\tau_0$,  and the improved error bound for sufficiently small $\tau$ is
\begin{equation}
\|w(\cdot, t_n) - w^{[n]}\|_{H^1} + \|\partial_t w(\cdot, t_n) - z^{[n]}\|_{L^2} \lesssim \eps^2\tau, \quad 0 \leq n \leq \frac{T/\eps^2}{\tau}.
\label{eq:inf}
\end{equation}
\end{theorem}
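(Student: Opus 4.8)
The plan is to run the whole analysis on the relativistic NLSE \eqref{eq:NLS} and to bound $\|\psi(t_n)-\psi^{[n]}\|_{H^1}$, because the reconstruction formulas \eqref{eq:wz} and \eqref{eq:semi2} together with the boundedness of $\langle\nabla\rangle^{\pm1}$ give $\|w(\cdot,t_n)-w^{[n]}\|_{H^1}+\|\partial_t w(\cdot,t_n)-z^{[n]}\|_{L^2}\lesssim\|\psi(t_n)-\psi^{[n]}\|_{H^1}$ at once. The decisive preliminary observation is that the reduced nonlinearity is genuinely weak: since $\frac1\eps\sin(\eps\phi)=\phi-\frac{\eps^2}{6}\phi^3+O(\eps^4)$, one has $f(\phi)=-\frac{\eps^2}{6}\phi^3+\eps^4 r(\phi)$ with $r$ and its derivatives bounded uniformly in $\eps$ on bounded sets, so that $F(\psi)=O(\eps^2)$ and $F$ is $O(\eps^2)$-Lipschitz on bounded subsets of $H^1$. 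I would keep the cubic term, on which the phase-cancellation mechanism acts, and treat $\eps^4 r$ as a harmless remainder.

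First I would secure stability and an a priori $H^1$ bound on $\psi^{[n]}$ by induction on $n$: assuming $\|\psi^{[k]}\|_{H^1}\lesssim1$ for $k\le n$, the isometry of $e^{i\tau\langle\nabla\rangle}$ on $H^1$ and the $O(\eps^2)$-Lipschitz bound on $F$ yield, through discrete Gronwall, a total amplification factor $\exp(C\eps^2 n\tau)=\exp(CT)=O(1)$ over the entire range $0\le n\le T/(\eps^2\tau)$; the induction then closes as soon as the accumulated truncation error is of the claimed size. Writing $e^{[n]}:=\psi(t_n)-\psi^{[n]}$ and unfolding \eqref{eq:semi1} against Duhamel's formula \eqref{eq:exact} gives
\begin{equation*}
e^{[n]}=\sum_{k=0}^{n-1}e^{i(n-1-k)\tau\langle\nabla\rangle}\Big[\tau e^{i\tau\langle\nabla\rangle}\big(F(\psi(t_k))-F(\psi^{[k]})\big)+\mathcal{E}^{k}\Big],
\end{equation*}
where $\mathcal{E}^{k}=\int_0^\tau e^{i(\tau-\sigma)\langle\nabla\rangle}\big[F(\psi(t_k+\sigma))-e^{i\sigma\langle\nabla\rangle}F(\psi(t_k))\big]\,d\sigma$ is the one-step Lawson quadrature error. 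The propagated part is absorbed by the Gronwall factor, so everything reduces to estimating $\sum_k e^{i(n-1-k)\tau\langle\nabla\rangle}\mathcal{E}^{k}$ \emph{without} taking absolute values inside the sum.

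This is where the RCO technique enters, and it is cleanest in the filtered variable $\Phi(t)=e^{-it\langle\nabla\rangle}\psi(t)$, which solves $\partial_t\Phi=e^{-it\langle\nabla\rangle}F(e^{it\langle\nabla\rangle}\Phi)=O(\eps^2)$ and hence varies slowly. I would split the frequencies at a cutoff of order $\tau_0^{-1}$: the high modes are discarded at the cost $\tau_0^{m+1}$ by assumption (A) (the \emph{regularity compensation}), while for the finitely many retained low modes the cubic part of $\mathcal{E}^k$ factorizes into a $k$-phase $e^{ik\tau\omega}$, with detuning $\omega=\pm\delta_{l_1}\pm\delta_{l_2}\pm\delta_{l_3}-\delta_l$ coming from the cubic convolution under the constraint $\pm l_1\pm l_2\pm l_3=l$, times a slowly varying amplitude of size $O(\eps^2)$ and a one-step quadrature factor $\frac{e^{i\omega\tau}-1}{i\omega}-\tau=O(\omega\tau^2)$. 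For \emph{non-resonant} detunings $\omega\neq0$ the geometric sum over $k$ is $O(1/(\tau|\omega|))$ instead of $O(n)$, so the step count $N=T/(\eps^2\tau)$ is effectively replaced by $O(1/\tau)$ and the combined contribution is $O(\eps^2\tau)$; for \emph{resonant} detunings $\omega=0$ the quadrature factor vanishes and only the $O(\eps^4)$ slow drift of $\Phi$ survives, giving $O(\eps^4\tau^2)$ per step, which even summed over $N$ steps without cancellation is again $O(\eps^2\tau)$. The hypothesis $\tau<\beta\tau_0$ is exactly what keeps the nonzero low-frequency detunings bounded away from the aliasing set $2\pi\mathbb{Z}/\tau$, so that the geometric denominators stay uniformly nondegenerate. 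Finally the remainder $\eps^4 r(\psi)$, having no exploitable phase structure, is bounded termwise by $O(\eps^4\tau^2)$ per step, which times $N$ is once more $O(\eps^2\tau)$.

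The hard part will be precisely the non-polynomial character of $\sin$, since the RCO phase-cancellation identity is tailored to the convolution structure of a single monomial; the way around it is the clean split $f(\phi)=-\frac{\eps^2}{6}\phi^3+\eps^4 r(\phi)$, with the cubic term handled by the resonant/non-resonant dichotomy above and the non-polynomial remainder rendered negligible by its extra factor $\eps^2$ rather than by any cancellation. The second delicate point is to guarantee, uniformly in $\eps$, a positive lower bound on the nonzero low-frequency detunings depending only on $\tau_0$, which is exactly what dictates the role of the cutoff parameter $\tau_0$ and the coupling $\tau<\beta\tau_0$ in the statement.
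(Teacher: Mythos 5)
Your proposal is correct and follows essentially the same route as the paper's proof: the same splitting of the weak nonlinearity into a cubic $O(\eps^2)$ part plus an $O(\eps^4)$ remainder (the paper's $g(\phi)=-\frac{\eps^2}{48}(\phi+\overline{\phi})^3+\eps^4 r(\phi)$), the same twisted variable $e^{-it\langle\nabla\rangle}\psi$, the same frequency cutoff at $O(1/\tau_0)$ costing $\tau_0^{m+1}$, the same non-resonant dichotomy where the quadrature factor $O(|\delta_{l,l_1,l_2,l_3}|\tau^2)$ cancels the geometric-sum bound $O(1/(\tau|\delta_{l,l_1,l_2,l_3}|))$ under the coupling $\tau<\beta\tau_0$, all closed by induction plus Gronwall. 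The only step you leave implicit is that the ``slowly varying amplitude'' must be handled by summation by parts (Abel summation) on the Fourier coefficients of the twisted variable, which is exactly how the paper justifies replacing the step count $n$ by the phase-sum bound.
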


Correspondingly, for the full-discretization \eqref{eq:psifull1}--\eqref{eq:psifull2} with \eqref{eq:wfull}, we have the following improved uniform error bounds up to the time $T_{\eps}$.
\begin{theorem}
\label{thm:full}
Under the  assumption {\rm (A)}, there exist $h_0 > 0$ and $0 < \tau_0 < 1$ sufficiently small and independent of $\varepsilon$ such that, for any $0 < \eps \leq 1$, when $0 < h \leq h_0$ and $0< \tau < \beta \tau_0 $ for a fixed constant $\beta > 0$, we have the following improved error bound
\begin{equation}
\|w(\cdot, t_n) - I_M w^n\|_{H^1} + \|\partial_t w(\cdot, t_n) - I_M z^n\|_{L^2} \lesssim h^m + \eps^2\tau + \tau_0^{m+1},\quad 0 \leq n \leq \frac{T/\eps^2}{\tau}.
\label{eq:error_full}
\end{equation}
 In particular, if the exact solution is sufficiently smooth, e.g., $w, \partial_t w \in H^{\infty}$, the improved error bound for sufficiently small $\tau$ is
\begin{equation}
\|w(\cdot, t_n) - I_M w^n\|_{H^1} + \|\partial_t w(\cdot, t_n) - I_M z^n\|_{L^2} \lesssim h^m + \eps^2\tau,\quad 0 \leq n \leq \frac{T/\eps^2}{\tau}.
\end{equation}
\end{theorem}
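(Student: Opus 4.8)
The plan is to reduce the fully discrete estimate to the semi-discrete bound of Theorem~\ref{thm:semi} combined with the standard approximation properties of the Fourier pseudospectral method, and then to re-run the regularity compensation oscillation (RCO) argument inside the finite-dimensional space $X_M$. Since the recovery formulas \eqref{eq:wz} and \eqref{eq:wfull} give $w-I_Mw^n=\mathrm{Re}(\psi(t_n)-I_M\psi^n)$ and $z-I_Mz^n=\mathrm{Im}\big(\langle\nabla\rangle(\psi(t_n)-I_M\psi^n)\big)$, it suffices to bound $\|\psi(t_n)-I_M\psi^n\|_{H^1}$, because the operator $\langle\nabla\rangle$ trades the $L^2$-norm of the $z$-error for the $H^1$-norm of the $\psi$-error. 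First I would split this by the triangle inequality into a purely spatial part $\|\psi(t_n)-P_M\psi(t_n)\|_{H^1}$ and a discrete part $\|P_M\psi(t_n)-I_M\psi^n\|_{H^1}$. Under assumption~(A) one has $w\in H^{m+1}$ and $\partial_t w\in H^{m}$, and since $\langle\nabla\rangle^{-1}$ gains one derivative this yields $\psi=w-i\langle\nabla\rangle^{-1}\partial_t w\in H^{m+1}$; the classical projection and aliasing estimates then give the spatial contribution $\lesssim h^m$ in \eqref{eq:error_full}.

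For the discrete part I would set up the error recursion by subtracting the scheme \eqref{eq:psifull1}--\eqref{eq:psifull2} from the exact Duhamel formula \eqref{eq:exact} projected onto $X_M$. Writing $e^n:=P_M\psi(t_n)-I_M\psi^n$, this takes the form $e^{n+1}=e^{i\tau\langle\nabla\rangle}e^n+\tau e^{i\tau\langle\nabla\rangle}\big(P_MF(\psi(t_n))-F(I_M\psi^n)\big)+\eta^n$, where $\eta^n$ is the quadrature error from freezing $\psi(t_n+\sigma)\approx\psi(t_n)$ in the Duhamel integral. The decisive structural feature is that the nonlinearity carries a factor $\eps^2$: since $f(\phi)=\tfrac{1}{\eps}\sin(\eps\phi)-\phi=-\tfrac{\eps^2}{6}\phi^3+O(\eps^4\phi^5)$, both $\eta^n$ and the Lipschitz constant of $F$ are $O(\eps^2)$. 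A direct discrete Gronwall argument over $n\lesssim 1/(\eps^2\tau)$ steps already gives a uniform-in-time bound $O(\tau)$ (the Gronwall factor $e^{C\eps^2 t_n}$ stays $O(1)$ precisely because the Lipschitz constant is $O(\eps^2)$ while $t_n\le T/\eps^2$); the RCO technique is what upgrades this to $O(\eps^2\tau)$.

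The RCO step proceeds as in Theorem~\ref{thm:semi} but with all frequency sums restricted to $l\in\mathcal{T}_M$. I would split the index set into a low block $|l|\le \lceil 1/\tau_0\rceil$ and its complement. On the complement, assumption~(A) bounds the tail by the regularity of $\psi$, producing the term $\tau_0^{m+1}$ in \eqref{eq:error_full}; the condition $\tau<\beta\tau_0$ guarantees that this cutoff is compatible with the time step. On the low block I exploit phase cancellation: after unrolling the recursion, the nonlinear contribution to each output mode is a geometric sum $\sum_k e^{ik\tau(\pm\delta_{l_1}\pm\delta_{l_2}\pm\delta_{l_3}\mp\delta_l)}$, and for low frequencies the phase is either zero (a resonance) or bounded away from zero modulo $2\pi/\tau$, so the non-resonant sums do not grow secularly. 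The limited number of resonant terms, weighted by the $\eps^2$ prefactor, then yields $O(\eps^2\tau)$ rather than the naive $O(\tau)$.

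The main obstacle is the non-polynomial nonlinearity. For the cubic NKGE the phase-cancellation bookkeeping is exact, because the frequency content of $\phi^3$ is a triple convolution of finitely many Fourier supports that interacts transparently with the oscillatory factors $e^{i\tau\delta_l}$; the transcendental term $\sin(\eps w)$ has no such finite representation. The decisive idea is to separate the leading cubic term $-\tfrac{\eps^2}{6}w^3$, to which the polynomial RCO machinery applies verbatim, from the smooth remainder $R(\eps,w)=f(w)+\tfrac{\eps^2}{6}w^3=O(\eps^4)$, which is estimated crudely in $H^1$ using the smoothness of $\sin$ and assumption~(A). Since $R$ is already $O(\eps^4)$, even a lossy bound summed over $O(1/\eps^2)$ steps contributes only $O(\eps^2\tau)$, consistent with the claimed rate. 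Throughout, a uniform-in-$\eps$ a priori bound $\|I_M\psi^n\|_{H^1}\lesssim 1$, propagated by induction on $n$ together with the error estimate, is needed to keep $I_M\psi^n$ in the region where the Taylor remainder and the Lipschitz constant of $F$ are controlled; closing this induction is the one genuinely technical part of the argument.
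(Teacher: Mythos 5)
Your proposal follows essentially the same route as the paper's proof: split off the $O(h^m)$ projection/aliasing error, run the error recursion directly between $I_M\psi^n$ and $P_M\psi(t_n)$, exploit the $O(\eps^2)$ Lipschitz constant with discrete Gronwall, apply the frequency cutoff at $1/\tau_0$ with RCO phase cancellation on the low modes together with the cubic-plus-$O(\eps^4)$ splitting of the sine nonlinearity, and close the induction for the a priori bound $\|I_M\psi^n\|_{H^1}\leq M+1$. One caution: your opening sentence ("reduce the fully discrete estimate to the semi-discrete bound of Theorem \ref{thm:semi}") suggests error splitting through the semi-discrete solution $\psi^{[n]}$, which the paper explicitly rejects because $\|\psi^{[n]}\|_{H^m}$ cannot be bounded over the whole interval $[0,T/\eps^2]$ under assumption (A); fortunately your actual argument never uses $\psi^{[n]}$ and is the direct proof, which is precisely what works.
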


\begin{remark}
Here, $\tau_0 \in (0, 1)$ is a cut-off parameter introduced in the proof for the improved uniform error bounds. In the numerical analysis, the high frequency Fourier modes $\vert l\vert > 1/\tau_0$ are controlled by the Fourier projection and the requirement $\tau \lesssim \tau_0$ enables the improved error bounds on the low frequency Fourier modes $\vert l\vert \leq 1/\tau_0$, where the constant in front of $\eps^2\tau^2$ depends on $\beta$. Here, $\tau_0$ can be chosen arbitrarily as long as the relation between $\tau$ and $\tau_0$ holds.
\end{remark}

\begin{remark}
In Theorem \ref{thm:semi} and Theorem \ref{thm:full} and the other results in this paper for the one-dimensional problem, we prove the error bounds for $\psi(x, t)$ in $H^1$-norm (i.e., for $w(x, t)$ in $H^1$-norm and $\partial_t w(x, t)$ in $L^2$-norm) due to the fact that $H^r$ is an algebra for $r > d/2$. In  two and three dimensional cases, the corresponding estimates should be in $H^2$-norm, which require higher regularity assumptions of the exact solution.
\end{remark}

\begin{remark}
The LEI-FP method can be extended to numerically solve the oscillatory SGE \eqref{eq:HOE}.  By taking the time step size $\kappa = \eps^2\tau$, the improved error bounds on the LEI-FP method for the long-time dynamics of the SGE \eqref{eq:21} can be extended to the oscillatory SGE \eqref{eq:HOE} up to the fixed time $T$. In 1D,  let $v^n$ and $q^n$ be the numerical approximations of $v(x, s_n)$ and $\partial_s v(x, s_n)$, respectively, and assume the exact solution $v(x, s)$ of the oscillatory SGE \eqref{eq:HOE} satisfies for some $m\ge 0$,
\begin{align*}
& v \in L^{\infty}\left([0, T]; H^{m+1}\right),\quad   \partial_s v \in L^{\infty}\left([0, T]; H^{m}\right), \\
& \|v\|_{L^{\infty}\left([0, T]; H^{m+1}\right)} \lesssim 1,\quad   \|\partial_s v\|_{L^{\infty}\left([0, T]; H^{m}\right)} \lesssim \frac{1}{\eps^2};
\end{align*}
then there exist $h_0 > 0$ and $0 < \kappa_0 < 1$ sufficiently small and independent of $\eps$ such that, for any $0 < \eps \leq 1$, when the mesh size $0 < h \leq h_0$ and the time step $0 < \kappa \leq \beta\eps^2 \kappa_0$ for a fixed constant $\beta > 0$, we have the following improved error bound
\begin{equation}
\left\|v(\cdot, s_n) - I_M v^n\right\|_{H^1} + \eps^2	\left\|\partial_s v(\cdot, s_n) - I_M q^n\right\|_{L^2} \lesssim h^m + \kappa +\kappa^{m+1}_0, \ 0 \leq n \leq \frac{T}{\kappa}. 
\label{eq:HOE_error}
\end{equation}
 In particular, if the exact solution is sufficiently smooth, e.g., $v, \partial_s v \in H^{\infty}$, the improved error bound for sufficiently small $\kappa$ is
\begin{equation}
\|v(\cdot, t_n) - I_M v^n\|_{H^1} + \eps^2\|\partial_t v(\cdot, t_n) - I_M q^n\|_{L^2} \lesssim h^m +\kappa,\quad 0 \leq n \leq \frac{T}{\kappa}.
\end{equation}
From the improved error bound \eqref{eq:HOE_error}, we obtain the temporal error of the LEI-FP method for the oscillatory SGE \eqref{eq:HOE} is independent of $\eps$, but the temporal resolution is still $O(\eps^2)$. In other words, in practical simulations, it needs to choose the time step size $\kappa \lesssim \eps^2$ to obtain the accurate numerical approximation.
\end{remark}

\subsection{Proof for Theorem \ref{thm:semi}}
The assumption (A) is equivalent to the regularity of $\psi(x, t)$ as $
\|\psi\|_{L^{\infty}\left([0, T_{\varepsilon}]; H^{m+1}\right)} \lesssim 1$. Since $f(\phi) = \frac{1}{\eps}\sin(\eps \phi)-\phi$, we can write the function $g(\phi)$ with the $O(\eps^2)$ dominant term as
\begin{equation}
g(\phi) = f\left(\frac{1}{2}(\phi+\overline{\phi})\right) = -\frac{\eps^2}{48}\left(\phi+\overline{\phi}\right)^3 + \eps^4 r(\phi) =: \eps^2h(\phi) + \eps^4 r(\phi).
\end{equation}
Define the function $H$ as
\begin{equation}
H(\phi) = \eps^2i \langle\nabla\rangle^{-1}h(\phi).
\end{equation}
and let
\begin{equation}
H_t: \phi \mapsto e^{-it \langle\nabla\rangle}H\left(e^{it \langle\nabla\rangle}\phi\right), \quad t \in \mathbb{R},
\end{equation}
then we have the following estimates for the local truncation error \cite{BCF,BaoFS}.
\begin{lemma}
\label{lemma:local}
For $0<\eps\le 1$, the local error of the LEI scheme \eqref{eq:semi1} for the relativistic NLSE \eqref{eq:NLS} can be written as ($n = 0, 1, \ldots$)
\begin{equation}
\mathcal{E}^{n}  := \mathcal{L}_{\tau}(\psi(t_n)) - \psi(t_{n+1})  = \mathcal{H}(\psi(t_n))  + \mathcal{R}^n, \quad 
\label{eq:local}
\end{equation}
where
\begin{equation}\label{eq:mH-def}
\mathcal{H}(\psi(t_n)) 	= e^{i\tau\langle\nabla\rangle}\left(\tau H_0(\psi(t_n)) - \int^{\tau}_0  H_{\sigma}(\psi(t_n)) d \sigma \right),
\end{equation}
and the following error bounds  hold under the assumption (A) with $m \geq 0$,
\begin{equation}
\left\|\mathcal{H}(\psi(t_n))\right\|_{H^1}  \lesssim \varepsilon^2\tau^2, \quad \left\|\mathcal{R}^n\right\|_{H^1} \lesssim \varepsilon^4\tau^2.
\label{eq:localbound}
\end{equation}
\end{lemma}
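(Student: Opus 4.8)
The plan is to start from the integral form of the local error. Subtracting the scheme \eqref{eq:semi1} evaluated at the exact solution from Duhamel's formula \eqref{eq:exact}, the linear terms $e^{i\tau\langle\nabla\rangle}\psi(t_n)$ cancel and leave
\begin{equation*}
\mathcal{E}^n = \tau e^{i\tau\langle\nabla\rangle}F(\psi(t_n)) - \int_0^\tau e^{i(\tau-\sigma)\langle\nabla\rangle}F(\psi(t_n+\sigma))\,d\sigma.
\end{equation*}
Using $g=\eps^2 h+\eps^4 r$ I would split $F(\phi)=i\langle\nabla\rangle^{-1}g(\phi)=H(\phi)+\eps^4 i\langle\nabla\rangle^{-1}r(\phi)$, so that $\mathcal{E}^n$ decomposes into a leading $H$-contribution and an $\eps^4$-contribution. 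In the $H$-contribution I freeze the nonlinearity along the free flow, replacing $\psi(t_n+\sigma)$ by $e^{i\sigma\langle\nabla\rangle}\psi(t_n)$; pulling out the common factor $e^{i\tau\langle\nabla\rangle}$ and recognizing $e^{-i\sigma\langle\nabla\rangle}H(e^{i\sigma\langle\nabla\rangle}\psi(t_n))=H_\sigma(\psi(t_n))$ reproduces precisely the claimed $\mathcal{H}(\psi(t_n))$ of \eqref{eq:mH-def}, while the substitution error, together with the entire $\eps^4$-contribution, is collected into $\mathcal{R}^n$.

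For the bound $\|\mathcal{H}(\psi(t_n))\|_{H^1}\lesssim\eps^2\tau^2$, I would use that $e^{i\tau\langle\nabla\rangle}$ is an $H^1$-isometry and rewrite the bracket as a double time integral,
\begin{equation*}
\tau H_0(\psi(t_n)) - \int_0^\tau H_\sigma(\psi(t_n))\,d\sigma = -\int_0^\tau\!\!\int_0^\sigma \frac{d}{ds}H_s(\psi(t_n))\,ds\,d\sigma,
\end{equation*}
so the $\tau^2$ gain is just the double integration, provided $\frac{d}{ds}H_s$ stays $O(\eps^2)$ in $H^1$. Differentiating $H_s(\phi)=e^{-is\langle\nabla\rangle}H(e^{is\langle\nabla\rangle}\phi)$ produces two terms. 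In the first, the factor $\langle\nabla\rangle$ coming from the propagator cancels the $\langle\nabla\rangle^{-1}$ inside $H$, leaving $\eps^2 e^{-is\langle\nabla\rangle}h(e^{is\langle\nabla\rangle}\phi)$, bounded in $H^1$ by $\eps^2\|\phi\|_{H^1}^3$ since $h$ is cubic and $H^1(\Omega)$ is an algebra in 1D. The second carries $\langle\nabla\rangle^{-1}$ outside and the extra derivative $i\langle\nabla\rangle e^{is\langle\nabla\rangle}\phi$ inside $h'$; here I would trade the outer $\langle\nabla\rangle^{-1}$ for one derivative via $\|\langle\nabla\rangle^{-1}v\|_{H^1}=\|v\|_{L^2}$ and then estimate the product in $L^2$ by $\eps^2\|\phi\|_{H^1}^2\,\|\langle\nabla\rangle\phi\|_{L^2}$ using $H^1\hookrightarrow L^\infty$. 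Both terms are $O(\eps^2)$ under assumption (A), giving $\lesssim\eps^2\tau^2$.

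For $\|\mathcal{R}^n\|_{H^1}\lesssim\eps^4\tau^2$ I treat its two pieces separately. The $\eps^4$-contribution has the same rectangle-minus-integral structure, so writing it as $-\int_0^\tau\!\int_0^\sigma \partial_s G(s)\,ds\,d\sigma$ with $G(s)=e^{i(\tau-s)\langle\nabla\rangle}\eps^4 i\langle\nabla\rangle^{-1}r(\psi(t_n+s))$, the derivative $\partial_s G$ is $O(\eps^4)$ in $H^1$: the term where $\langle\nabla\rangle$ again hits the propagator cancels $\langle\nabla\rangle^{-1}$, and the term with $\partial_s\psi$ is controlled after noting $\|\partial_t\psi\|_{L^2}\lesssim1$ from the equation \eqref{eq:NLS} (since $\langle\nabla\rangle\psi\in L^2$ and $F=O(\eps^2)$), so this piece is $\lesssim\eps^4\tau^2$. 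The substitution error $\int_0^\tau e^{i(\tau-\sigma)\langle\nabla\rangle}\big[H(e^{i\sigma\langle\nabla\rangle}\psi(t_n))-H(\psi(t_n+\sigma))\big]\,d\sigma$ I bound by the Lipschitz estimate $\|H(a)-H(b)\|_{H^1}=\eps^2\|h(a)-h(b)\|_{L^2}\lesssim\eps^2\|a-b\|_{L^2}$ on $H^1$-balls, combined with Duhamel's formula which gives $\|e^{i\sigma\langle\nabla\rangle}\psi(t_n)-\psi(t_n+\sigma)\|_{L^2}\lesssim\eps^2\sigma$ because $F=O(\eps^2)$; integrating $\eps^2\cdot\eps^2\sigma$ over $[0,\tau]$ again yields $\eps^4\tau^2$.

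The crux, and the only genuinely non-routine step, is the cancellation between the $\langle\nabla\rangle$ generated by differentiating the free propagator and the $\langle\nabla\rangle^{-1}$ built into $H$ (and into $F$). Without it, $\frac{d}{ds}H_s$ would lose one derivative and the estimate would demand controlling $\|\psi\|_{H^2}$ with an $O(\eps^2)$ constant, which is not available from (A). This structural feature, namely that the relativistic NLSE formulation \eqref{eq:NLS} places $\langle\nabla\rangle^{-1}$ in front of the nonlinearity, is precisely why one separates the linear part before discretizing, and it is what turns the leading quadrature defect from $O(\tau^2)$ into $O(\eps^2\tau^2)$; the same mechanism propagates verbatim to the $\eps^4$ remainder.
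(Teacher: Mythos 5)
Your proposal is correct and follows essentially the same route as the paper's proof: the same splitting $F(\phi)=H(\phi)+\eps^4 i\langle\nabla\rangle^{-1}r(\phi)$, the same freezing of the nonlinearity along the free flow $\psi(t_n+\sigma)\approx e^{i\sigma\langle\nabla\rangle}\psi(t_n)$ to produce $\mathcal{H}(\psi(t_n))$, and the same bound mechanism, namely writing the rectangle-minus-integral defect as a double time integral of $\partial_s H_s$ and exploiting the cancellation of the $\langle\nabla\rangle$ generated by the propagator against the $\langle\nabla\rangle^{-1}$ inside $H$ (respectively inside $F$), with the substitution error and the $\eps^4$ piece collected into $\mathcal{R}^n$ exactly as in the paper's $\mathcal{R}^n_1+\mathcal{R}^n_2$. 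Your write-up merely fills in the Lipschitz and $\|\partial_t\psi\|_{L^2}\lesssim 1$ details that the paper leaves implicit; the argument itself is the same.
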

\begin{proof}By the decomposition of the function $g$, we have 
\begin{equation}
F(\phi) = i\langle\nabla\rangle^{-1}g(\phi) = H(\phi) + \eps^4 i\langle\nabla\rangle^{-1}r(\phi).
\end{equation}
Recall \eqref{eq:exact}, we can write the local truncation error as
\begin{align*}
\mathcal{E}^{n}  &=  \tau e^{i\tau\langle\nabla\rangle} F(\psi(t_n))-  \int^{\tau}_0 e^{i(\tau-\sigma)\langle\nabla\rangle} F(\psi(t_n + \sigma)) d \sigma \\
& =  \tau e^{i\tau\langle\nabla\rangle} H(\psi(t_n))-  \int^{\tau}_0 e^{i(\tau-\sigma)\langle\nabla\rangle} H(\psi(t_n + \sigma)) d \sigma+\mathcal{R}^n_1 \\
& =  \tau e^{i\tau\langle\nabla\rangle} H(\psi(t_n)) + \mathcal{R}^n_1 \\
& \quad -  \int^{\tau}_0 e^{i(\tau-\sigma)\langle\nabla\rangle} H\Bigg(e^{i\sigma\langle\nabla\rangle}\psi(t_n) + \int^{\sigma}_0 e^{i(\sigma-\theta)\langle\nabla\rangle} H(\psi(t_n + \theta)) d \theta\Bigg) d \sigma  \\
& = e^{i\tau\langle\nabla\rangle}\left(\tau H(\psi(t_n)) - \int^{\tau}_0  e^{-i\sigma\langle\nabla\rangle} H(e^{i\sigma\langle\nabla\rangle}\psi(t_n)) d \sigma \right)+ \mathcal{R}^n_1+ \mathcal{R}^n_2 \\
& = \mathcal{H}^n(\psi(t_n)) + \mathcal{R}^n,
\end{align*}
where $\mathcal{H}^n(\psi(t_n))$ is defined in \eqref{eq:mH-def} and $\mathcal{R}^n = \mathcal{R}^n_1+\mathcal{R}^n_2$.

Since the operator $e^{it\langle\nabla\rangle}$ is an isometry on $H^1$ and $H^1$ is an algebra in 1D, we obtain that under the assumption (A) with $m \geq 0$,
\begin{equation*}
\left\|\mathcal{H}(\psi(t_n))\right\|_{H^1}  \lesssim \tau^2 \left\|\partial_{\sigma}\left(e^{-i\sigma\langle\nabla\rangle} H(e^{i\sigma\langle\nabla\rangle}\psi(t_n))\right)\right\|_{H^1}\lesssim  \eps^2\tau^2 \left\|\psi(t_n)\right\|_{H^1},
\end{equation*}
and $\left\|\mathcal{R}^n\right\|_{H^1} \lesssim \varepsilon^4\tau^2$, which complete the proof of the error bounds \eqref{eq:localbound}.
\end{proof}

\noindent
{\emph{Proof for Theorem \ref{thm:semi}}} Under the assumption (A), we will take the induction argument to prove that there exists $\tau_{c}>0$ such that for $0<\tau<\tau_c$ the following estimates hold
\begin{equation}
\label{eq:semi_induc}
\|e^{[n]}\|_{H^1}\leq C(\eps^2\tau+\tau_0^{m+1}),\quad \|\psi^{[n]}\|_{H^1}\leq M+1,\quad 0\leq n\leq \frac{T/\eps^2}{\tau},
\end{equation}
where $M=\left\|\psi\right\|_{L^{\infty}([0, T_\eps]; H^1)}$ and $C>0$ is independent of $n$,  $\eps$ and $\tau$. Since $\psi^{[0]}=\psi_0$, the case $n=0$ is obvious. Assume the error bound \eqref{eq:semi_induc} holds true for all $0 \le n \le p \le \frac{T/\eps^2}{\tau} - 1$, then we are going to prove the case $n = p+1$. 

For the numerical approximation $\psi^{[n]}$ obtained by the LEI \eqref{eq:semi1}, introduce the error function 
\begin{equation}
e^{[n]}:=e^{[n]}(x) = \psi^{[n]}-\psi(t_n), \quad 0 \le n \le \frac{T/\eps^2}{\tau},
\end{equation}
 then from \eqref{eq:semi1} and \eqref{eq:local}, we have the following error equation
\begin{align}
e^{[n+1]} & =  e^{i\tau\langle\nabla\rangle}e^{[n]} + W^n + \mathcal{E}^{n},\quad 0 \leq n \leq \frac{T/\eps^2}{\tau},
\label{eq:eg_semi}
\end{align}
where $W^n := W^n(x)$ is given by
\begin{equation*}
W^n(x) = \tau e^{i\tau\langle\nabla\rangle} \left(F(\psi^{[n]}) -F(\psi(t_n))\right).
\end{equation*}
Under the assumption (A) and \eqref{eq:semi_induc} for $n \le p$ , we have
\begin{equation}
\left\|W^n\right\|_{H^1}\lesssim \tau \left\|\left(F(\psi^{[n]}) -F(\psi(t_n))\right)\right\|_{H^1} \lesssim\eps^2\tau\left\|e^{[n]}\right\|_{H^1},\quad n \le p.
\label{eq:Wb}
\end{equation}
According to the error function \eqref{eq:eg_semi}, we obtain
\begin{equation}
e^{[n+1]} = e^{i(n+1)\tau\langle\nabla\rangle}e^{[0]} + \sum\limits_{k=0}^n e^{i(n-k)\tau\langle\nabla\rangle}\Big(W^k + \mathcal{E}^k\Big).
\end{equation}
Since $e^{[0]} = 0$, combining with \eqref{eq:local}, \eqref{eq:localbound} and  \eqref{eq:Wb}, we have the following estimate
\begin{equation}
\label{eq:egrow}
\left\|e^{[n+1]}\right\|_{H^1}	\lesssim \varepsilon^2\tau + \varepsilon^2 \tau  \sum_{k=0}^n \left\|e^{[k]}\right\|_{H^1} + \left\| \sum\limits_{k=0}^n e^{i(n-k)\tau\langle\nabla\rangle}\mathcal{H}(\psi(t_k))\right\|_{H^1}.
\end{equation}

The regularity compensation oscillation (RCO) technique has been introduced to establish the improved uniform error bounds of the time-splitting method for the (nonlinear) Schr\"odinger equation, nonlinear Klein--Gordon equation and Dirac equation \cite{BCF,BCF2,BFYIN}. Here, for the Lawson-type exponential integrator scheme, we apply the RCO technique to deal with the last term in the RHS of \eqref{eq:egrow} and obtain the improved uniform error bound. From the relativistic NLSE \eqref{eq:NLS}, we find that $\partial_t \psi(x, t) - i\langle\nabla\rangle \psi(x, t) = F(\phi)=O(\eps^2)$. Thus, we introduce the `twisted variable'
 \begin{equation}
 \phi(x, t) = e^{-it\langle\nabla\rangle}\psi(x, t), \quad t \geq 0,
 \label{eq:twist_def}
 \end{equation}
which satisfies the equation $\partial_t\phi(x, t)=e^{-it\langle\nabla\rangle}F(e^{it\langle\nabla\rangle}\phi(x, t))$. Under the assumption (A), we have $\|\phi\|_{L^{\infty}([0, T_{\eps}]; H^{m+1})}\lesssim1$ and $\left\|\partial_t \phi\right\|_{L^{\infty}([0, T_{\eps}]; H^{m+1})} \lesssim \eps^2$ with
\begin{equation}
\|\phi(t_{n}) - \phi(t_{n-1})\|_{H^{m+1}} \lesssim \varepsilon^2 \tau, \quad 0 \le n \le \frac{T/\eps^2}{\tau}.
\label{eq:twist}
\end{equation}

{\bf Step 1}. Choose the cut-off parameter on the Fourier modes. Let $\tau_0 \in (0, 1)$ and choose $M_0 = 2\lceil 1/\tau_0\rceil \in \mathbb{Z}^+$ ($\lceil \cdot\rceil$ is the ceiling function) with $1/\tau_0 \leq M_0/2 < 1 + 1/\tau_0$.  Under the assumption (A) and the properties of operators $e^{-it\langle\nabla\rangle}$ and  $\langle\nabla\rangle^{-1}$, we have 
\begin{equation}
\|P_{M_0}\mathcal{H}(e^{i t_k\langle\nabla\rangle}(P_{M_0}\phi(t_k)))-
\mathcal{H}(e^{i t_k\langle\nabla\rangle}\phi(t_k))\|_{H^1} \lesssim \eps^2\tau\tau_0^{m+1}.
\end{equation}
Combining above estimates, we obtain for $n\leq p$,

\begin{equation}
\left\|e^{[n+1]}\right\|_{H^1}  \lesssim \tau_0^{m+1} +\varepsilon^2\tau +\varepsilon^2\tau\sum_{k=0}^n\left\|e^{[k]}\right\|_{H^1} +\left\|\mathcal{J}^n\right\|_{H^1}, \label{eq:final2}
\end{equation}
where
\begin{equation}
 \mathcal{J}^n = \sum\limits_{k=0}^n e^{-i(k+1)\tau\langle\nabla\rangle}P_{M_0}\mathcal{H}(e^{i t_k\langle\nabla\rangle}(P_{M_0}\phi(t_k))).
 \label{eq:def_J}
\end{equation}

{\bf Step 2}. Analyze the low Fourier modes term $\mathcal{J}^n$. Recalling the function $ H(\phi)$, we have the decomposition
\begin{equation}
H(\phi)=\sum_{q=1}^4 H^{\{q\}}(\phi),\quad H^{\{q\}}(\phi)= -\frac{\eps^2}{48} i\langle\nabla\rangle^{-1}h^{\{q\}}(\phi),\quad q=1, 2, 3, 4,
\end{equation}
with $ h^{\{1\}}(\phi)=\phi^3, h^{\{2\}}(\phi)=3\bar{\phi}\phi^2, h^{\{3\}}(\phi)=3\bar{\phi}^2\phi, h^{\{4\}}(\phi)=\bar{\phi}^3$. For $\sigma \in \mathbb{R}$ and $q = 1, 2, 3, 4$, introducing $ H_{\sigma}^{\{q\}}(\psi(t_k)) = e^{-i\sigma\langle\nabla\rangle} H^{\{q\}}(e^{i\sigma\langle\nabla\rangle}\psi(t_k))$ and
\begin{equation}
\mathcal{H}^{\{q\}}(\psi(t_k)) 	= e^{i\tau\langle\nabla\rangle}\left(\tau H_0^{\{q\}}(\psi(t_k)) - \int^{\tau}_0  H_{\sigma}^{\{q\}}(\psi(t_k)) d \sigma \right),
\end{equation}
we have for $q = 1, 2, 3, 4$,
\begin{equation}\label{eq:Ln1}
\mathcal{J}^n=\sum_{q=1}^4\mathcal{J}_q^n,\quad \mathcal{J}_q^n = \sum\limits_{k=0}^n e^{-i(k+1)\tau\langle\nabla\rangle}P_{M_0}\mathcal{H}^{\{q\}}(e^{i t_k\langle\nabla\rangle}(P_{M_0}\phi(t_k))).
\end{equation}
Since the estimates for $\mathcal{J}^n_q$ ($q= 1, 2, 3, 4$) are indeed same, we only show the estimates for $\mathcal{J}^n_1$ in details as an example. For $l\in\mathcal{T}_{M_0}$, define the index set $\mathcal{I}_l^{M_0}$ associated to $l$ as
\begin{equation}
\mathcal{I}_l^{M_0}=\left\{(l_1, l_2, l_3)~\vert~ \ l_1+l_2+l_3=l,\ l_1, l_2, l_3\in\mathcal{T}_{M_0}\right\},
\end{equation}
then we have the following expansion
\begin{align*}
& e^{-it_{k+1}\langle\nabla\rangle}P_{M_0}( e^{i\tau\langle\nabla\rangle}H^{\{1\}}_{\sigma}(e^{i t_k\langle\nabla\rangle}P_{M_0}\phi(t_k))) \\
&=\quad -\eps^2\sum\limits_{l\in\mathcal{T}_{M_0}}\sum\limits_{(l_1, l_2, l_3)\in\mathcal{I}_l^{M_0}}\frac{i}{48\delta_l} \mathcal{G}^k_{l, l_1, l_2, l_3}(\sigma) e^{i\mu_l(x-a)},
\end{align*}
where the coefficients  $\mathcal{G}^k_{l,l_1,l_2,l_3}(\sigma)$ are functions of $sigma\in\mathbb{R}$ defined as
\begin{equation}
\mathcal{G}^k_{l,l_1,l_2,l_3}(\sigma)
= e^{-i(t_k+\sigma)\delta_{l,l_1,l_2,l_3}}\widehat{\phi_{l_1}}(t_k)\widehat{\phi}_{l_2}(t_k)\widehat{\phi}_{l_3}(t_k)
\label{eq:mGdef}
\end{equation}
with $\delta_{l,l_1,l_2,l_3} = \delta_l -\delta_{l_1} -\delta_{l_2} - \delta_{l_3}$. Thus, we have
\begin{equation}\label{eq:remainder-dec}
\mathcal{J}_1^n  = -\frac{i\eps^2}{48}\sum\limits_{k=0}^n
\sum\limits_{l\in\mathcal{T}_{M_0}}\sum\limits_{(l_1,l_2,l_3)\in\mathcal{I}_l^{M_0}}\frac{1}{\delta_l}\Lambda^k_{l, l_1, l_2, l_3}e^{i\mu_l(x-a)},
\end{equation}
where
\begin{align}
\Lambda^k_{l,l_1,l_2,l_3} & = - \tau \mathcal{G}^k_{l, l_1, l_2, l_3}(0) +\int_0^\tau\mathcal{G}^k_{l,l_1,l_2,l_3}(\sigma) d \sigma\\
 & = r_{l, l_1, l_2, l_3}e^{-it_k\delta_{l, l_1, l_2, l_3}}c^k_{l, l_1, l_2, l_3},
\label{eq:Lambdak}
\end{align}
with coefficients $c^k_{l, l_1, l_2, l_3}$ and $r_{l, l_1, l_2, l_3}$ given by
\begin{align}
c^k_{l, l_1, l_2, l_3}=& \ \widehat{\phi}_{l_1}(t_k) \widehat{\phi}_{l_2}(t_k)\widehat{\phi}_{l_3}(t_k),\label{eq:calFdef}\\
r_{l, l_1, l_2, l_3}= & \ -\tau +\int_0^\tau e^{-i\sigma\delta_{l, l_1, l_2, l_3}} d \sigma = O\left(\tau^2 \delta_{l, l_1, l_2, l_3}\right).\label{eq:rest}
\end{align}
We only need  to consider the case $\delta_{l, l_1, l_2, l_3}\neq 0$ as $r_{l, l_1, l_2, l_3} = 0$ if $\delta_{l, l_1, l_2, l_3}= 0$. For $l\in\mathcal{T}_{M_0}$ and $(l_1, l_2, l_3)\in\mathcal{I}_l^{M_0}$, we have
\begin{equation}
\vert\delta_{l, l_1, l_2, l_3}\vert\leq 4\delta_{M_0/2}= 4 \sqrt{1+\mu_{M_0/2}^2} <  4\sqrt{1+\frac{4\pi^2(1+\tau_0)^2}{\tau_0^2(b-a)^2}},
\end{equation}
which implies
\begin{equation}
 \frac{\tau}{2}\vert\delta_{l,l_1,l_2,l_3}\vert \leq \alpha\pi,
 \end{equation}
if $0 < \tau \leq \alpha \frac{\pi (b-a)\tau_0}{2\sqrt{\tau_0^2(b-a)^2+4\pi^2(1+\tau_0)^2}}:= \beta \tau_0$.
Denoting $S^n_{l,l_1,l_2,l_3}=\sum_{k=0}^n e^{-it_k\delta_{l,l_1,l_2,l_3}}$ ($n\ge0$), for  $0< \tau \leq \beta\tau_0$, we then obtain
\begin{equation}\label{eq:Sbd}
\vert S^n_{l,l_1,l_2,l_3}\vert\leq \frac{1}{\vert\sin(\tau \delta_{l,l_1,l_2,l_3}/2)\vert}\leq\frac{C}{\tau\vert\delta_{l,l_1,l_2,l_3}\vert},\quad C = \frac{2\alpha\pi}{\sin(\alpha \pi)},\quad \forall n\ge0.
\end{equation}
Using summation-by-parts, \eqref{eq:Lambdak} implies
\begin{equation}
\sum_{k=0}^n\Lambda^k_{l,l_1,l_2,l_3}=r_{l,l_1,l_2,l_3}\big[\sum_{k=0}^{n-1}S^k_{l,l_1,l_2,l_3} (c^k_{l,l_1,l_2,l_3}-c^{k+1}_{l,l_1,l_2,l_3})+S^n_{l,l_1,l_2,l_3}c^n_{l,l_1,l_2,l_3}\big],
\label{eq:lambdasum}
\end{equation}
with
\begin{align}
&c^k_{l,l_1,l_2,l_3}-c^{k+1}_{l,l_1,l_2,l_3}\nn\\
& = (\widehat{\phi}_{l_1}(t_k)-\widehat{\phi}_{l_1}(t_{k+1})) \widehat{\phi}_{l_2}(t_k)\widehat{\phi}_{l_3}(t_k) + \widehat{\phi}_{l_1}(t_{k+1})(\widehat{\phi}_{l_2}(t_k)-\widehat{\phi}_{l_2}(t_{k+1}))\widehat{\phi}_{l_3}(t_k)\nn \\
&\;\;\;\;\; + \widehat{\phi}_{l_1}(t_{k+1}) \widehat{\phi}_{l_2}(t_{k+1})(\widehat{\phi}_{l_3}(t_k)-\widehat{\phi}_{l_3}(t_{k+1})).\label{eq:cksum}
\end{align}

Combining \eqref{eq:rest}, \eqref{eq:Sbd}, \eqref{eq:lambdasum} and \eqref{eq:cksum}, we have
\begin{align}
\left\vert\sum_{k=0}^n\Lambda^k_{l,l_1,l_2,l_3}\right\vert
\lesssim & \  \tau\sum\limits_{k=0}^{n-1}\bigg(
\left\vert\widehat{\phi}_{l_1}(t_k)-\widehat{\phi}_{l_1}(t_{k+1})\right\vert\left\vert\widehat{\phi}_{l_2}(t_k)\right\vert \left\vert\widehat{\phi}_{l_3}(t_k)\right\vert\nn\\
&\ +\left\vert\widehat{\phi}_{l_1}(t_{k+1})\right\vert\left\vert\widehat{\phi}_{l_2}(t_k)-\widehat{\phi}_{l_2}(t_{k+1})\right\vert\left\vert\widehat{\phi}_{l_3}(t_k)\right\vert \nn\\
&\ +\left\vert\widehat{\phi}_{l_1}(t_{k+1})\right\vert\left\vert\widehat{\phi}_{l_2}(t_{k+1})\right\vert \left\vert\widehat{\phi}_{l_3}(t_k)-\widehat{\phi}_{l_3}(t_{k+1})\right\vert\bigg)\nn\\
&\ + \tau \left\vert\widehat{\phi}_{l_1}(t_n)\right\vert\left\vert\widehat{\phi}_{l_2}(t_n)\right\vert \left\vert\widehat{\phi}_{l_3}(t_n)\right\vert.\label{eq:sumlambda}
\end{align}
Based on \eqref{eq:remainder-dec} and \eqref{eq:sumlambda}, we have
\begin{align}
&\left\|\mathcal{J}_1^n\right\|^2_{H^1}  \nn \\
& = \ \eps^4
\sum\limits_{l\in\mathcal{T}_{M_0}}\left\vert\sum\limits_{(l_1,l_2,l_3)\in\mathcal{I}_l^{M_0}}\sum\limits_{k=0}^n\Lambda^k_{l,l_1,l_2,l_3}\right\vert^2\nn \\
& \lesssim \ \eps^4\tau^2
\bigg\{\sum_{l\in\mathcal{T}_{M_0}}\bigg(\sum\limits_{(l_1,l_2,l_3)\in\mathcal{I}_l^{M_0}}\left\vert\widehat{\phi}_{l_1}(t_n)\right\vert\left\vert\widehat{\phi}_{l_2}(t_n)\right\vert \left\vert\widehat{\phi}_{l_3}(t_n)\right\vert\bigg)^2\nn \\
&\;\;\;\; + n \sum\limits_{k=0}^{n-1}\sum_{l\in\mathcal{T}_{M_0}}\bigg[\bigg(\sum\limits_{(l_1,l_2,l_3)\in\mathcal{I}_l^{M_0}}
\left\vert\widehat{\phi}_{l_1}(t_k)-\widehat{\phi}_{l_1}(t_{k+1})\right\vert\left\vert\widehat{\phi}_{l_2}(t_k)\right\vert \left\vert\widehat{\phi}_{l_3}(t_k)\right\vert\bigg)^2\nn\\
& \;\;\;\; +\bigg(\sum\limits_{(l_1,l_2,l_3)\in\mathcal{I}_l^{M_0}}
\left\vert\widehat{\phi}_{l_1}(t_{k+1})\right\vert\left\vert\widehat{\phi}_{l_2}(t_k)-\widehat{\phi}_{l_2}(t_{k+1})\right\vert \left\vert\widehat{\phi}_{l_3}(t_k)\right\vert\bigg)^2\nn\\
& \;\;\;\; +\bigg(\sum\limits_{(l_1,l_2,l_3)\in\mathcal{I}_l^{M_0}}
\left\vert\widehat{\phi}_{l_1}(t_{k+1})\right\vert\left\vert\widehat{\phi}_{l_2}(t_{k+1})\right\vert \left\vert\widehat{\phi}_{l_3}(t_k)-\widehat{\phi}_{l_3}(t_{k+1})\right\vert\bigg)^2\bigg]\bigg\}.
\label{eq:sumlambda-2}
\end{align}
Expanding $(\phi(x))^3=\sum\limits_{l\in\mathbb{Z}}\sum\limits_{l_1+l_2+l_3=l, l_j\in\mathbb{Z}}\left\vert\widehat{\phi}_{l_1}(t_n)\right\vert\left\vert\widehat{\phi}_{l_2}(t_n)\right\vert\left\vert\widehat{\phi}_{l_3}(t_n)\right\vert e^{i\mu_l(x-a)}$, we obtain
\begin{equation*}
\sum_{l\in\mathcal{T}_{M_0}} \bigg(\sum\limits_{(l_1,l_2,l_3)\in\mathcal{I}_l^{M_0}}\left\vert\widehat{\phi}_{l_1}(t_n)\right\vert\left\vert\widehat{\phi}_{l_2}(t_n)\right\vert \left\vert\widehat{\phi}_{l_3}(t_n)\right\vert\bigg)^2\leq \left\|\phi^3(x)\right\|_{L^2}^2 \lesssim \left\|\phi(x)\right\|_{H^1}^6 \lesssim1.
\end{equation*}
Thus, in light of \eqref{eq:twist}, we estimate each term in \eqref{eq:sumlambda-2} similarly as
\begin{align}
 \|\mathcal{J}_1^n\|_1^2& \ \lesssim \eps^4\tau^2 \bigg[\left\|\phi(t_n)\right\|_{H^1}^6+n\sum\limits_{k=0}^{n-1}
\left\|\phi(t_k) - \phi(t_{k+1})\right\|_{H^1}^2(\left\|\phi(t_k)\right\|_{H^1}+ \left\|\phi(t_{k+1})\right\|_{H^1})^4\bigg]\nn \\
&\ \lesssim  \eps^4\tau^2 + n^2\eps^4\tau^2 (\eps^2\tau)^2
\lesssim \eps^4\tau^2,\quad 0\le n\leq p.
\label{eq:est-l2}
\end{align}
In the same process, we have the estimates for $\mathcal{J}_q^n$ with $q =2, 3, 4$. Substituting the estimates for $\mathcal{J}^n$ into \eqref{eq:final2},  we have
\begin{equation}
\left\|e^{[n+1]}\right\|_{H^1} \lesssim \tau_0^{m+1} +\eps^2\tau+\eps^2\tau\sum_{k=0}^n\left\|e^{[k]}\right\|_{H^1},\quad 0\le n\leq p.
\end{equation}
By Gronwall inequality, we obtain
\begin{equation}
\left\|e^{[n+1]}\right\|_{H^1} \lesssim \eps^2\tau + \tau_0^{m+1},\quad  0\le n\leq p,
\end{equation}
which shows that the first inequality in \eqref{eq:semi_induc} holds for $n = p+1$. Then, it is easy to check that
\begin{equation}
\left\|\psi^{[p+1]}\right\|_{H^1}	 \le \left\|\psi(t_{p+1})\right\|_{H^1}	+ \left\|e^{[p+1]}\right\|_{H^1} \leq M+1,
\end{equation}
which means that the second inequality in  \eqref{eq:semi_induc} also holds for $n = p+1$ and finishes the induction process. In  view of \eqref{eq:wz} and \eqref{eq:semi2}, the proof for the improved error bound \eqref{eq:error_semi} is completed.

\begin{remark}
If we directly apply the error bounds for the local truncation error \eqref{eq:localbound} and Gronwall inequality in the error growth \eqref{eq:egrow}, we can only get the uniform error bound $\left\|e^{[n+1]}\right\|_{H^1} \lesssim \tau$ for $0 \le n \le \frac{T/\eps^2}{\tau}-1$. Here, we exploit the RCO technique to establish the improved uniform error bound \eqref{eq:error_semi}, but there is no additional requirement for the exact solution of the SGE \eqref{eq:21}.
\end{remark}

\subsection{Proof for Theorem \ref{thm:full}}
In our previous work for the NKGE \cite{BCF}, the improved error bound for the full-discretization is mainly based on the error splitting approach, i.e., we first proof the semi-discrete-in-time error bound \eqref{eq:error_semi}, and then compare the difference between $\psi^{[n]}$ and $\psi^n$. To obtain the convergence order $h^m$ in space, we need the bound of $\|\psi^{[n]}\|_{H^m}$, which is generally not available under the assumption (A) for arbitrary $T > 0$ in the nonlinear equation. Following the classical arguments of Gronwall type, we can show $\|\psi^{[n]}\|_{H^m}$ is bounded for certain $n\tau\leq T_0/\eps^2$, and $T_0>0$ is determined by $\|\psi_0\|_{H^m}$ only, which is not suitable to derive the estimates over the whole interval $[0,T/\eps^2]$ in the assumption (A). Here, we will directly prove the error bound \eqref{eq:error_full} for the LEI-FP method \eqref{eq:psifull1}--\eqref{eq:psifull2} without comparing with the semi-discretization-in-time. Similar to the proof of Lemma \ref{lemma:local}, we have following result for the local truncation error for the LEI-FP method \eqref{eq:psifull1}--\eqref{eq:psifull2} and we omit the details here for brevity.

\begin{lemma}
\label{lemma:full_local}
For $0 < \eps \le 1$, the local truncation error of the  LEI-FP method \eqref{eq:psifull1}--\eqref{eq:psifull2} for the relativistic NLSE \eqref{eq:NLS} can be written as
\begin{equation*}
\overline{\mathcal{E}}^n := P_M\mathcal{L}_{\tau}(P_M\psi(t_{n})) - P_M\psi(t_{n+1}) = P_M \mathcal{H}(P_M\psi(t_n)) +\mathcal{Y}^n,\quad 0 \leq n \leq \frac{T/\eps^2}{\tau}-1,
\end{equation*}
where the following error bounds hold under the assumption (A),
\begin{equation}
\left\|\mathcal{H}(P_M\psi(t_n))\right\|_{H^1} \lesssim \eps^2\tau^2, \quad \left\|\mathcal{Y}^n\right\|_{H^1}	\lesssim \eps^4\tau^2 + \eps^2\tau h^m.
\label{eq:full_lb}
\end{equation}
\end{lemma}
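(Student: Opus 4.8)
The plan is to mirror the proof of Lemma \ref{lemma:local}, inserting the projection $P_M$ throughout and isolating the extra spatial error. First I would use that $P_M$ and $e^{i\tau\langle\nabla\rangle}$ are both Fourier multipliers, hence commute, so that $P_M e^{i\tau\langle\nabla\rangle}P_M = e^{i\tau\langle\nabla\rangle}P_M$. Combining this with $P_M$ applied to Duhamel's formula \eqref{eq:exact}, the linear parts cancel and I obtain
$$\overline{\mathcal{E}}^n = \tau e^{i\tau\langle\nabla\rangle}P_M F(P_M\psi(t_n)) - \int_0^\tau e^{i(\tau-\sigma)\langle\nabla\rangle}P_M F(\psi(t_n+\sigma))\,d\sigma.$$
As in Lemma \ref{lemma:local} I would split $F = H + \eps^4 i\langle\nabla\rangle^{-1}r$; the $\eps^4$ remainder appears in both terms and the difference of the two is $O(\eps^4\tau^2)$ by the same mean-value argument, so it is absorbed into $\mathcal{Y}^n$.

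Next, using $H_\sigma(\phi) = e^{-i\sigma\langle\nabla\rangle}H(e^{i\sigma\langle\nabla\rangle}\phi)$ together with the commutation of $P_M$ with $e^{\pm i\sigma\langle\nabla\rangle}$, I would rewrite the definition \eqref{eq:mH-def} evaluated at $P_M\psi(t_n)$ as
$$P_M\mathcal{H}(P_M\psi(t_n)) = \tau e^{i\tau\langle\nabla\rangle}P_M H(P_M\psi(t_n)) - \int_0^\tau e^{i(\tau-\sigma)\langle\nabla\rangle}P_M H(e^{i\sigma\langle\nabla\rangle}P_M\psi(t_n))\,d\sigma.$$
Subtracting this from the $H$-part of $\overline{\mathcal{E}}^n$ leaves precisely
$$\mathcal{Y}^n_H := -\int_0^\tau e^{i(\tau-\sigma)\langle\nabla\rangle}P_M\left[H(\psi(t_n+\sigma)) - H(e^{i\sigma\langle\nabla\rangle}P_M\psi(t_n))\right]d\sigma.$$
The bound $\|\mathcal{H}(P_M\psi(t_n))\|_{H^1}\lesssim\eps^2\tau^2$ then follows verbatim from Lemma \ref{lemma:local}, since the integrand in \eqref{eq:mH-def} differs from its value at $\sigma=0$ by $\tau\,\partial_\sigma(\cdots)$ and $H$ carries a prefactor $\eps^2$, using $\|P_M\psi(t_n)\|_{H^1}\lesssim 1$ from the $L^2$-stability of $P_M$ and assumption (A).

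The core estimate is that for $\mathcal{Y}^n_H$, and here I would insert the decomposition obtained from \eqref{eq:exact} over $[t_n,t_n+\sigma]$,
$$\psi(t_n+\sigma) - e^{i\sigma\langle\nabla\rangle}P_M\psi(t_n) = e^{i\sigma\langle\nabla\rangle}(I-P_M)\psi(t_n) + \int_0^\sigma e^{i(\sigma-\theta)\langle\nabla\rangle}F(\psi(t_n+\theta))\,d\theta.$$
The first summand is a pure projection error, controlled by $\|(I-P_M)\psi(t_n)\|_{H^1}\lesssim h^m\|\psi(t_n)\|_{H^{m+1}}\lesssim h^m$ under assumption (A); the second is $O(\eps^2\tau)$ since $F=O(\eps^2)$ and $\sigma\le\tau$. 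Because $H$ is a cubic nonlinearity composed with $\langle\nabla\rangle^{-1}$ and carries the $\eps^2$ prefactor, it is Lipschitz with constant $\lesssim\eps^2$ on the $H^1$-ball containing these arguments (using that $H^1$ is an algebra in 1D and the uniform bounds from (A)). Hence the bracket is $\lesssim\eps^2(h^m+\eps^2\tau)$, and integrating in $\sigma$ over $[0,\tau]$ yields $\|\mathcal{Y}^n_H\|_{H^1}\lesssim \eps^2\tau h^m + \eps^4\tau^2$. Setting $\mathcal{Y}^n$ to be $\mathcal{Y}^n_H$ plus the $\eps^4$-remainder difference gives the claimed bound $\|\mathcal{Y}^n\|_{H^1}\lesssim\eps^4\tau^2 + \eps^2\tau h^m$.

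I expect the main obstacle to be bookkeeping rather than conceptual: keeping the two distinct sources of error—the $O(h^m)$ spatial projection defect and the $O(\eps^2\tau)$ nonlinear Duhamel remainder—cleanly separated as they pass through the Lipschitz estimate for $H$, and checking that the $\eps^2$ prefactor of $H$ multiplies \emph{both}, so that the spatial error enters only as $\eps^2\tau h^m$ rather than $h^m$ and the nonlinear remainder as $\eps^4\tau^2$. The uniform $H^1$-bounds on $\psi(t_n+\theta)$ and $P_M\psi(t_n)$ required for that Lipschitz constant are supplied directly by assumption (A) and the stability of $P_M$.
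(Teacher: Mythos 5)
Your proof is correct and follows exactly the route the paper intends: the paper omits the proof of this lemma, stating only that it is similar to the proof of Lemma \ref{lemma:local}, and your argument is precisely that extension---inserting the commuting projection $P_M$ into Duhamel's formula, reusing the $H$ versus $\eps^4 r$ splitting, and letting the projection defect $(I-P_M)\psi(t_n)=O(h^m)$ pass through the $\eps^2$-Lipschitz bound for $H$ to produce the $\eps^2\tau h^m$ contribution in $\mathcal{Y}^n$. The only cosmetic imprecision is in the remainder part: since the first term evaluates $r$ at $P_M\psi(t_n)$ while the integrand at $\sigma=0$ evaluates it at $\psi(t_n)$, the difference also contains a piece $\eps^4\tau\left\|r(P_M\psi(t_n))-r(\psi(t_n))\right\|_{H^1}\lesssim \eps^4\tau h^m$ beyond the stated $O(\eps^4\tau^2)$, but this is absorbed by the claimed bound $\eps^4\tau^2+\eps^2\tau h^m$ in any case.
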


\noindent
{\emph{Proof for Theorem \ref{thm:full}}}  By the definition of $\psi$ in \eqref{eq:psi}, it suffices to prove
\begin{equation}
\|\psi(\cdot, t_n) - I_M \psi^n\|_{H^1} \lesssim h^m + \eps^2\tau + \tau_0^{m+1},\quad 0 \leq n \leq \frac{T/\eps^2}{\tau}.
\label{eq:error_full_psi}
\end{equation}

By the standard Fourier projection and interpolation results, we have $\left\|\psi(t_n) - I_M \psi^n\right\|_{H^1} \lesssim \left\|I_M \psi^n - P_M\psi(t_n)\right\|_{H^1} + h^{m}$, which means that it just needs to consider the growth of the error function $e^n = I_M \psi^n - P_M\psi(t_n)\in Y_M$. For $0 \leq n \leq \frac{T/\eps^2}{\tau}-1$, we have
\begin{equation}
e^{n+1}	= I_M \psi^{n+1} - P_M \mathcal{L}_{\tau}(P_M\psi(t_n)) + \overline{\mathcal{E}}^n =  e^{i\tau\langle\nabla\rangle}e^{n}+{Z}^n + \overline{\mathcal{E}}^n,
\label{eq:etg_nl}
\end{equation}
where $Z^n\in Y_M$ is given by
\begin{equation*}
Z^n =  \tau e^{i\tau\langle\nabla\rangle}\left(I_M F(I_M\psi^n) - P_M F(P_M\psi(t_n))\right).
\end{equation*}
Similar to the proof for the semi-discretization, we also apply the induction argument to prove that there exist $h_{c}>0$ and $\tau_{c}>0$ such that for $0<h<h_c$ and $0<\tau<\tau_c$ we have the following estimates
\begin{equation}
\label{eq:full_induc}
\|e^n\|_{H^1}\leq C(h^m+\eps^2\tau+\tau_0^{m+1}),\ \|I_M\psi^n\|_{H^1}\leq M+1,\quad 0\leq n\leq \frac{T/\varepsilon^2}{\tau},
\end{equation}
where $M=\left\|\psi\right\|_{L^{\infty}([0, T_\eps]; H^1)}$, and $C>0$ is independent of $n$, $h$, $\varepsilon$ and $\tau$.

For $n=0$, \eqref{eq:full_induc} holds for sufficiently small $0 < h < h_1$ with $h_1 >0$ by the standard Fourier interpolation result, i.e. $\|e^0\|_{H^1}\leq C_1h^m$ and $ \|I_M\psi^0\|_{H^1}\leq M+1$. Assume \eqref{eq:full_induc} holds for $0 < n \le p \le \frac{T/\eps^2}{\tau}-1$, and we will prove it holds for the case $n = p+1$. Recalling the definition of $Z^n$, replacing $P_M$ by $I_M$, we have for $0 < h < h_2$ and $0<\tau<\tau_2$, 
\begin{equation}
\left\|Z^n\right\|_{H^1} \lesssim \eps^2 \tau \left( h^m +\|e^n\|_{H^1}\right),\quad 0 \leq n\leq p.
\label{eq:Z_b}
\end{equation}
Combining  the estimates \eqref{eq:full_lb} and \eqref{eq:Z_b},  we get for $0\le n \le p$,
\begin{equation}
\|e^{n+1}\|_{H^1} \lesssim  \ h^m+\eps^2\tau+\varepsilon^2\tau\sum_{k=0}^n\|e^k\|_{H^1}+\left\| \sum\limits_{k=0}^n e^{i(n-k)\tau\langle\nabla\rangle}\mathcal{H}(\psi(t_k))\right\|_{H^1},
\label{eq:final}
\end{equation}
where $\mathcal{H}(\cdot)$ is defined in Lemma \ref{lemma:local}.
Following the proof for the semi-discretization, introducing a cut-off parameter $\tau_0 \in (0, 1)$  and $M_0 = 2\lceil 1/\tau_0 \rceil \in \mathbb{Z}^+$ ($\lceil \cdot \rceil$ is the ceiling function) with $1/\tau_0 \leq M_0/2 < 1+ 1/\tau_0$. Under the assumption (A), replacing $P_M$ by $P_{M_0}$ in \eqref{eq:final}, we can derive
\begin{equation}
\left\|e^{n+1}\right\|_{H^1}\lesssim h^m + \tau_0^{m+1} + \eps^2\tau + \eps^2\tau \sum_{k=0}^{n} \left\|e^{k}\right\|_{H^1} + \left\| \mathcal{J}^n\right\|_{H^1},
\end{equation}
where $\mathcal{J}^n$ is defined in \eqref{eq:def_J}. Recalling the estimate \eqref{eq:est-l2} for  $\mathcal{J}^n$, we obtain
\begin{equation*}
\left\|e^{n+1}\right\|_{H^1}\lesssim h^m + \tau_0^{m+1} + \eps^2\tau + \eps^2\tau \sum_{k=0}^{n} \left\|e^{k}\right\|_{H^1}, \quad 0 \leq  n \leq p.
\end{equation*}
The discrete Gronwall inequality implies
\begin{equation}
\left\|e^{n+1}\right\|_{H^1}\lesssim h^m + \tau_0^{m+1} + \eps^2\tau, \quad 0 \leq  n \leq p,
\end{equation}
and the first inequality in \eqref{eq:full_induc} holds for $n = p+1$. Subsequently, it leads to
\begin{equation}
\left\|\psi^{p+1}\right\|_{H^1} \le \left\|\psi(t_{p+1})\right\|_{H^1}	+ \left\|e^{p+1}\right\|_{H^1} \leq M + 1,
\end{equation}
which indicates the second inequality in  \eqref{eq:full_induc} for $n = p+1$. The induction for the proof of \eqref{eq:error_full_psi} is completed, which implies the improved error bound \eqref{eq:error_full}.

\section{Numerical results}
In this section, we present some numerical examples to illustrate the efficiency of the LEI-FP method and confirm the improved error bounds.

\subsection{Long-time dynamics in 1D}
We present the numerical result for the long-time dynamics of the SGE \eqref{eq:WNE} in 1D to confirm the improved uniform error bound \eqref{eq:error_full}. We choose the initial data as
\begin{equation}
u_0(x) = \frac{2}{1+\cos^2(x)},\quad u_1(x) = \frac{1}{2+\sin(x)}, \quad x \in\Omega= (0, 2\pi). 	
\label{eq:initial}
\end{equation}
The numerical `exact' solution is obtained numerically by the LEI-FP method \eqref{eq:psifull1}--\eqref{eq:psifull2} with \eqref{eq:wfull} for the SGE \eqref{eq:WNE} in 1D with a very small time step $\tau_e = 10^{-4}$ and a very fine mesh size $h_e = \pi/64$. To quantify the error, we introduce the following error function
\begin{equation}
e(t_n) = \left\|w(x, t_n) - I_N w^n\right\|_{H^1} + \left\|\partial_t w(x, t_n) - I_N z^n\right\|_{L^2} .
\end{equation}
The errors are displayed at $t = 1/\eps^2$ with different $\eps$. For spatial errors, we choose a very small time step $\tau_e = 10^{-4}$ such that the temporal errors can be neglected. For temporal errors, we choose a very fine mesh size $h_e = \pi/64$ such that the spatial errors are ignorable. Fig. \ref{fig:1D_h} and Fig. \ref{fig:1D_tau} show the long-time spatial and temporal errors of the the LEI-FP method \eqref{eq:psifull1}--\eqref{eq:psifull2} with \eqref{eq:wfull} for the SGE \eqref{eq:WNE} in 1D with different $\eps$, respectively.

\begin{figure}[ht!]
\begin{minipage}{0.49\textwidth}
\centerline{\includegraphics[width=6.5cm,height=5.5cm]{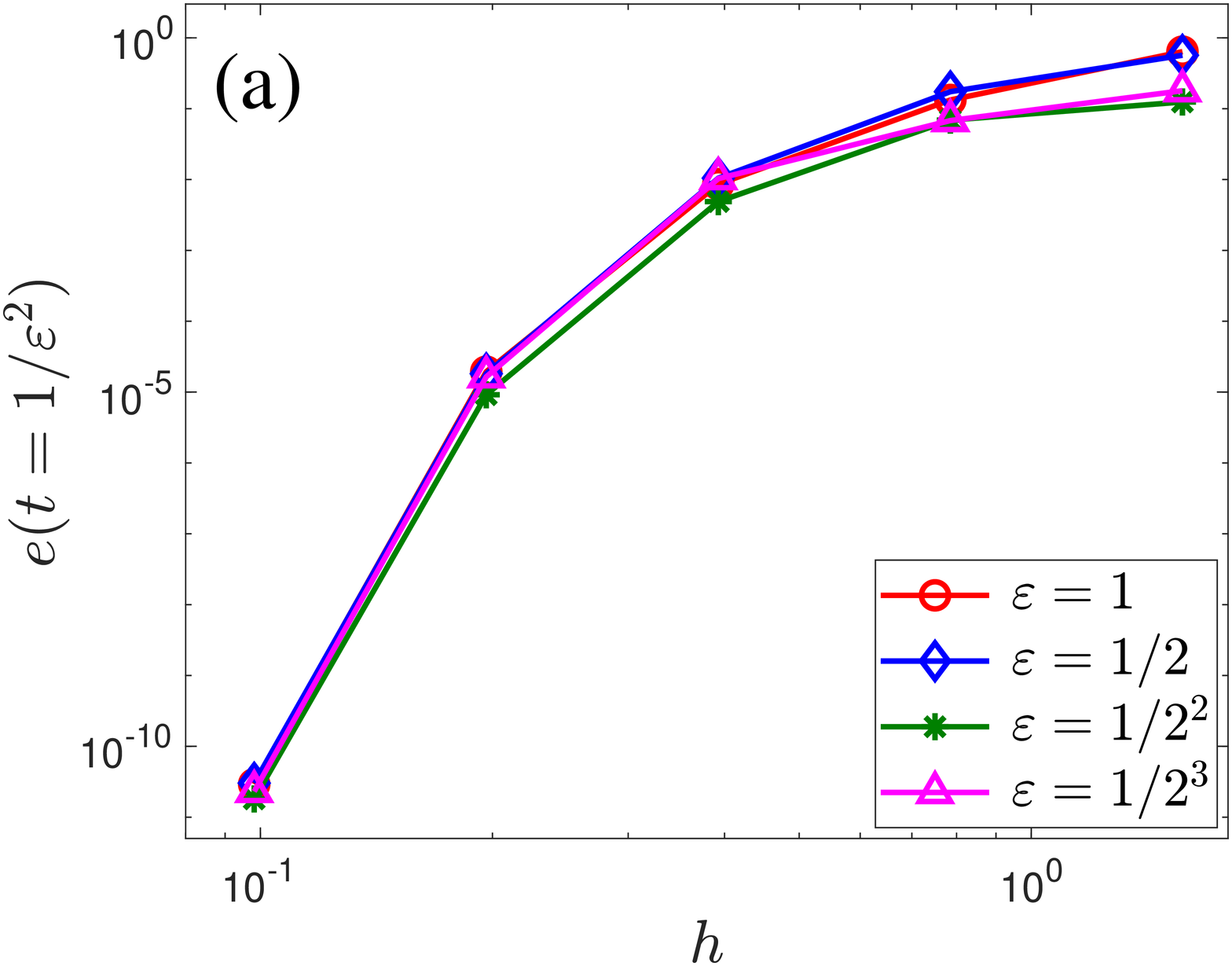}}
\end{minipage}
\begin{minipage}{0.49\textwidth}
\centerline{\includegraphics[width=6.5cm,height=5.5cm]{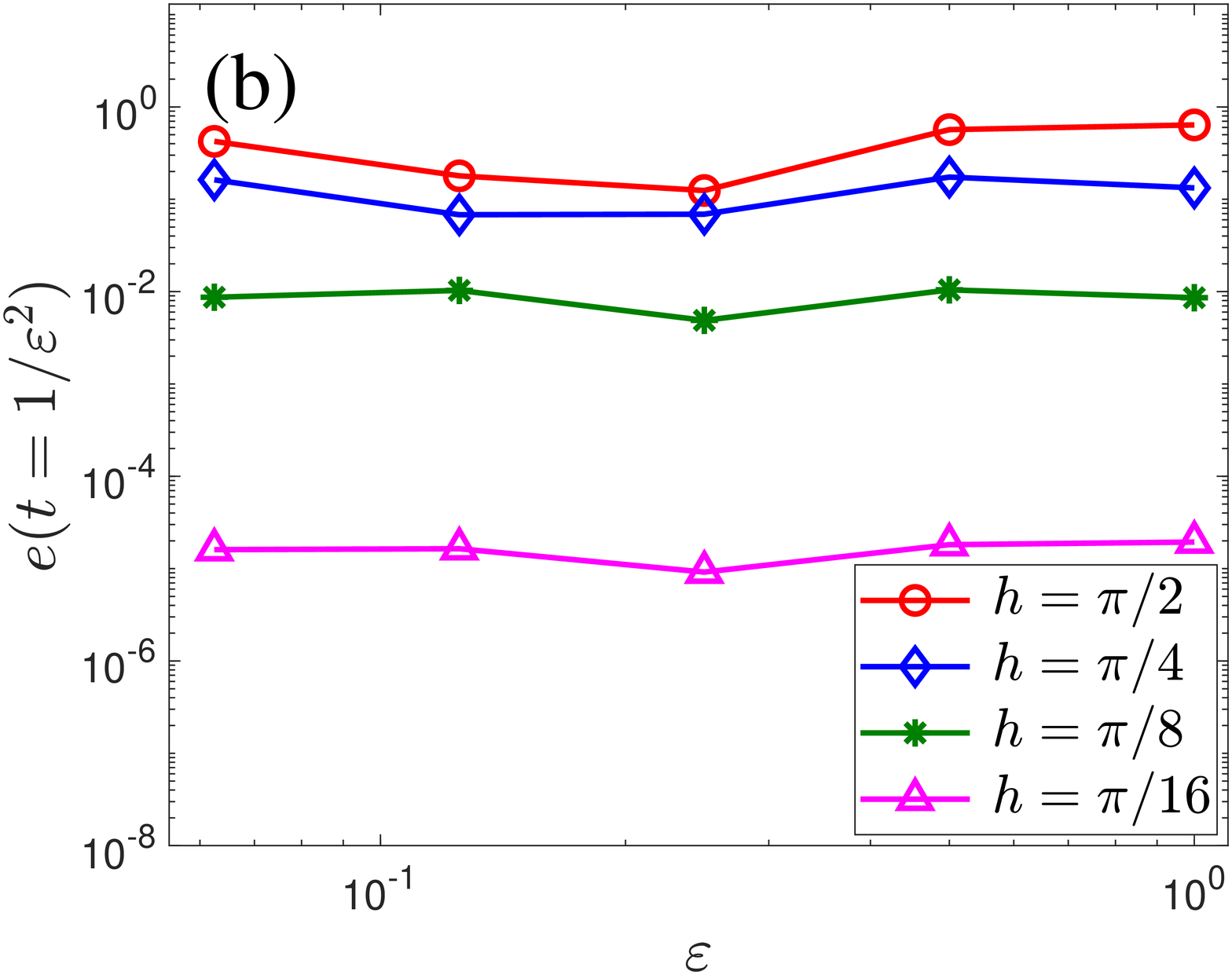}}
\end{minipage}
\caption{Long-time spatial errors of the LEI-FP method \eqref{eq:psifull1}--\eqref{eq:psifull2} with \eqref{eq:wfull} for the SGE \eqref{eq:WNE} in 1D at $t = 1/\eps^2$: (a) convergence rates in $h$, and (b) convergences rate in $\eps$.}
\label{fig:1D_h}
\end{figure}

\begin{figure}[ht!]
\begin{minipage}{0.49\textwidth}
\centerline{\includegraphics[width=6.5cm,height=5.5cm]{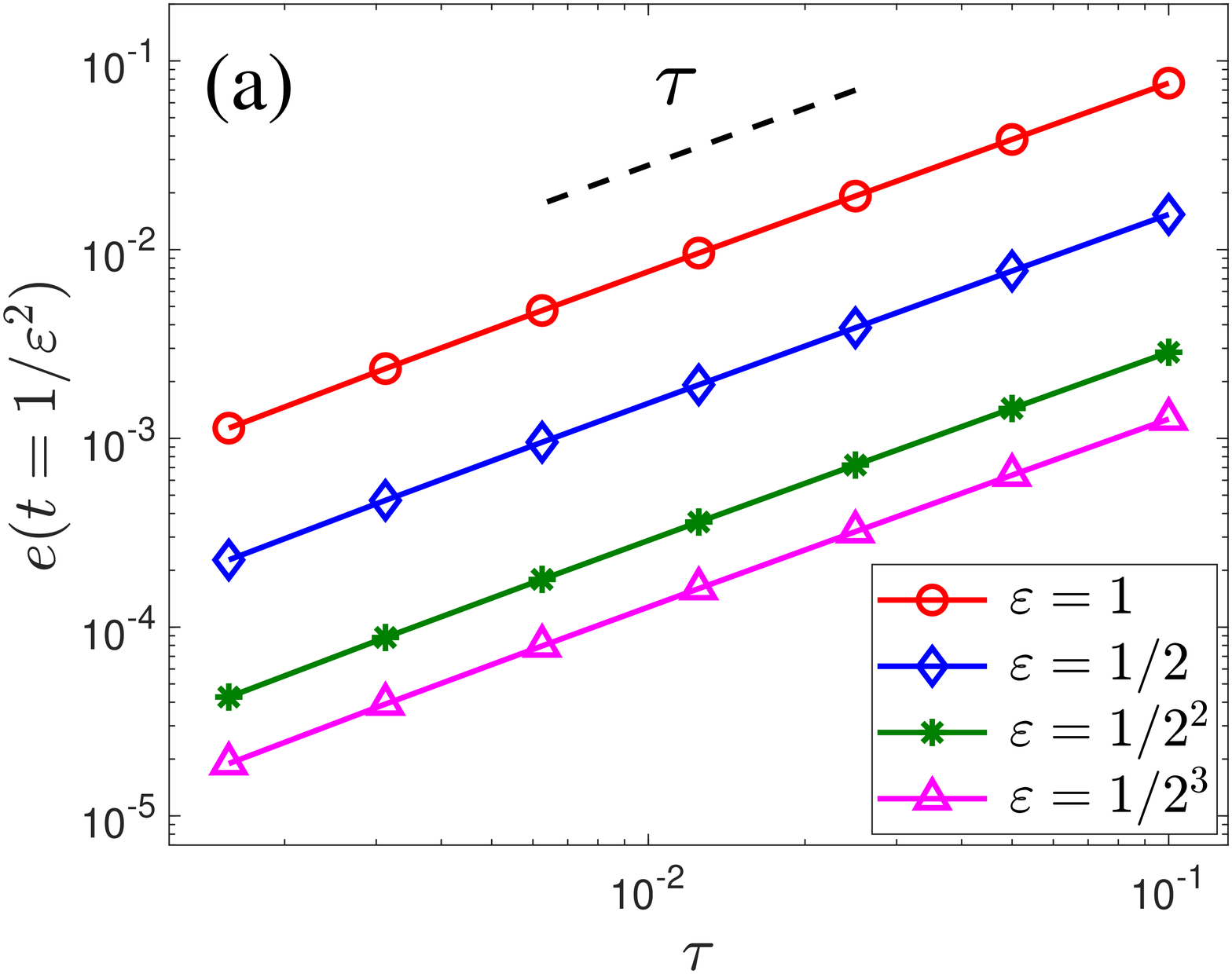}}
\end{minipage}
\begin{minipage}{0.49\textwidth}
\centerline{\includegraphics[width=6.5cm,height=5.5cm]{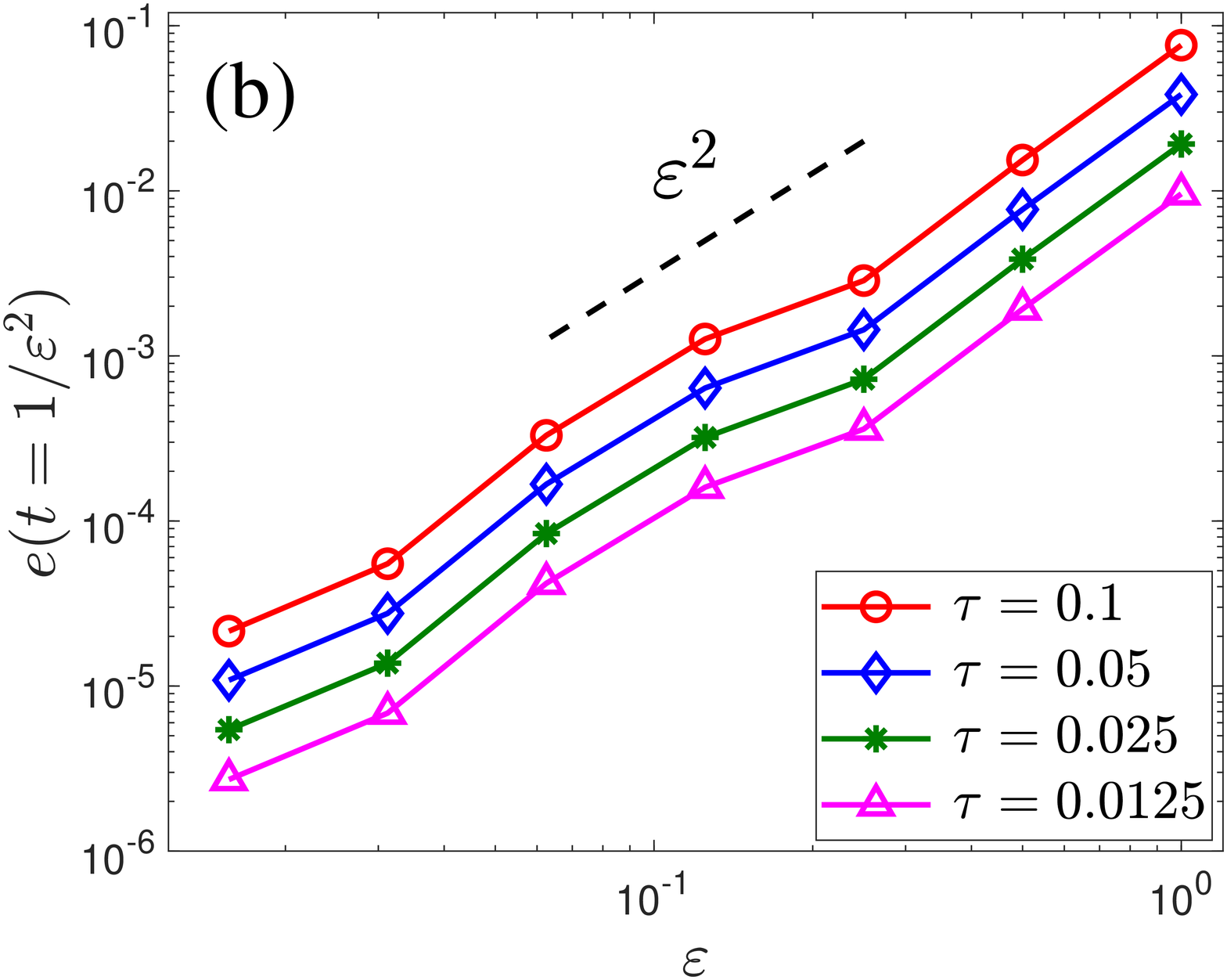}}
\end{minipage}
\caption{Long-time temporal errors of the LEI-FP method \eqref{eq:psifull1}--\eqref{eq:psifull2} with \eqref{eq:wfull} for the SGE \eqref{eq:WNE} in 1D at $t = 1/\eps^2$: (a) convergence rates in $\tau$, and (b) convergences rate in $\eps$.}
\label{fig:1D_tau}
\end{figure}

From Fig. \ref{fig:1D_h} and Fig. \ref{fig:1D_tau} and additional numerical results not shown here for brevity, we have the following observations:

(i) The LEI-FP method converges uniformly for $0 < \eps \leq 1$ in space with exponential convergence rate (cf. Fig. \ref{fig:1D_h}).

(ii) For any fixed $\eps = \eps_0 >0$, the LEI-FP method \eqref{eq:psifull1}--\eqref{eq:psifull2} with \eqref{eq:wfull} is first-order in time (cf. each line in Fig. \ref{fig:1D_tau}(a)) and the temporal errors behave like $O(\eps^2)$ for the fixed time step $\tau$ (cf. each line in Fig. \ref{fig:1D_tau}(b)).

(iii) The numerical result confirms the improved uniform error bound \eqref{eq:error_full} for the full-discretization.

\subsection{Long-time dynamics in 2D}
In this subsection, we show an example in 2D with the irrational aspect ratio of the domain $ (x, y) \in\Omega= (0, 1) \times (0, 2\pi)$. The initial data is chosen as
\begin{equation*}
\phi(x, y) = \frac{2}{2 + \cos^2(2\pi x+y)},\quad \gamma(x) = \frac{2}{2 + 2\cos^2(2\pi x+y)}.
\end{equation*}

\begin{figure}[ht!]
\begin{minipage}{0.49\textwidth}
\centerline{\includegraphics[width=6.5cm,height=5.5cm]{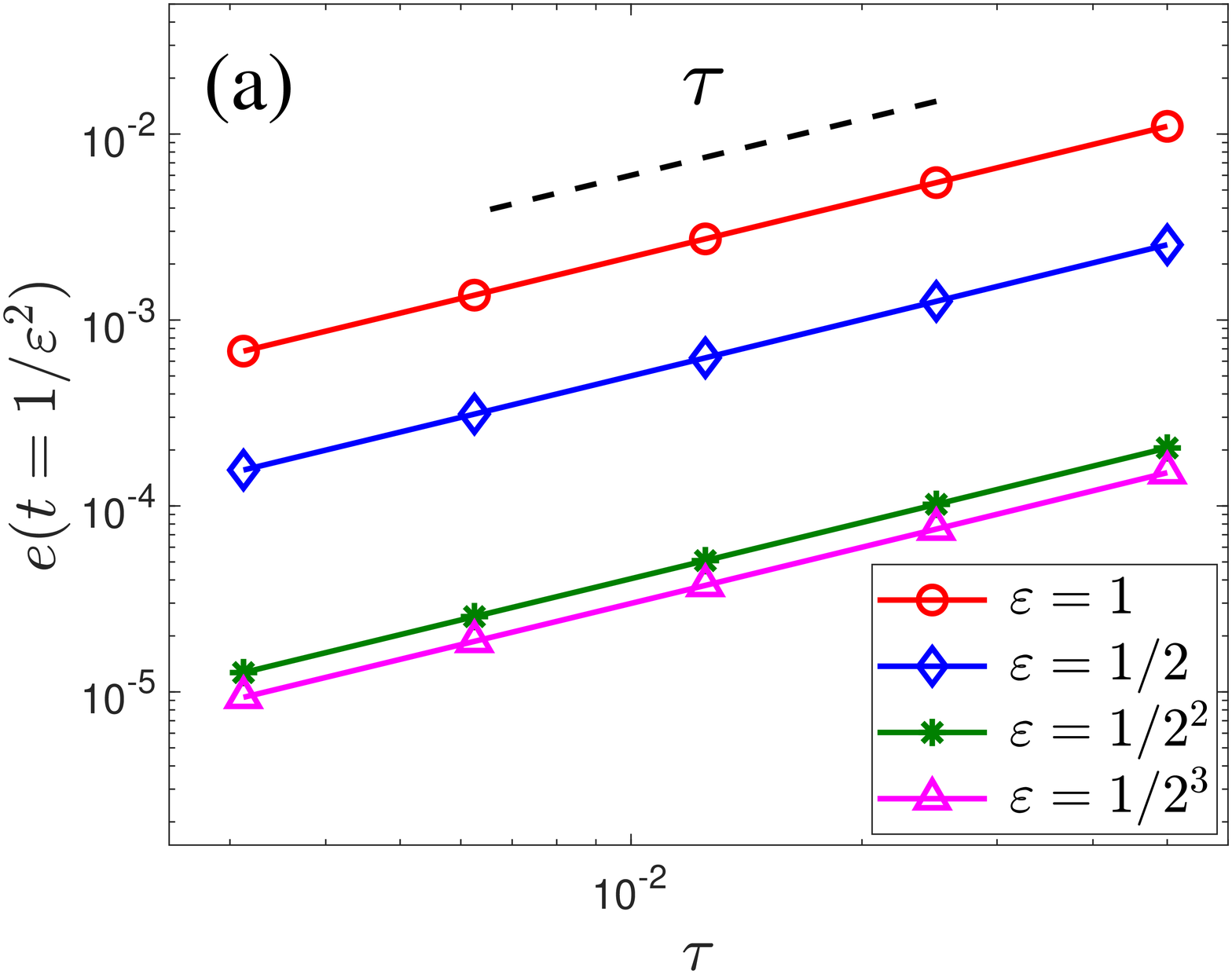}}
\end{minipage}
\begin{minipage}{0.49\textwidth}
\centerline{\includegraphics[width=6.5cm,height=5.5cm]{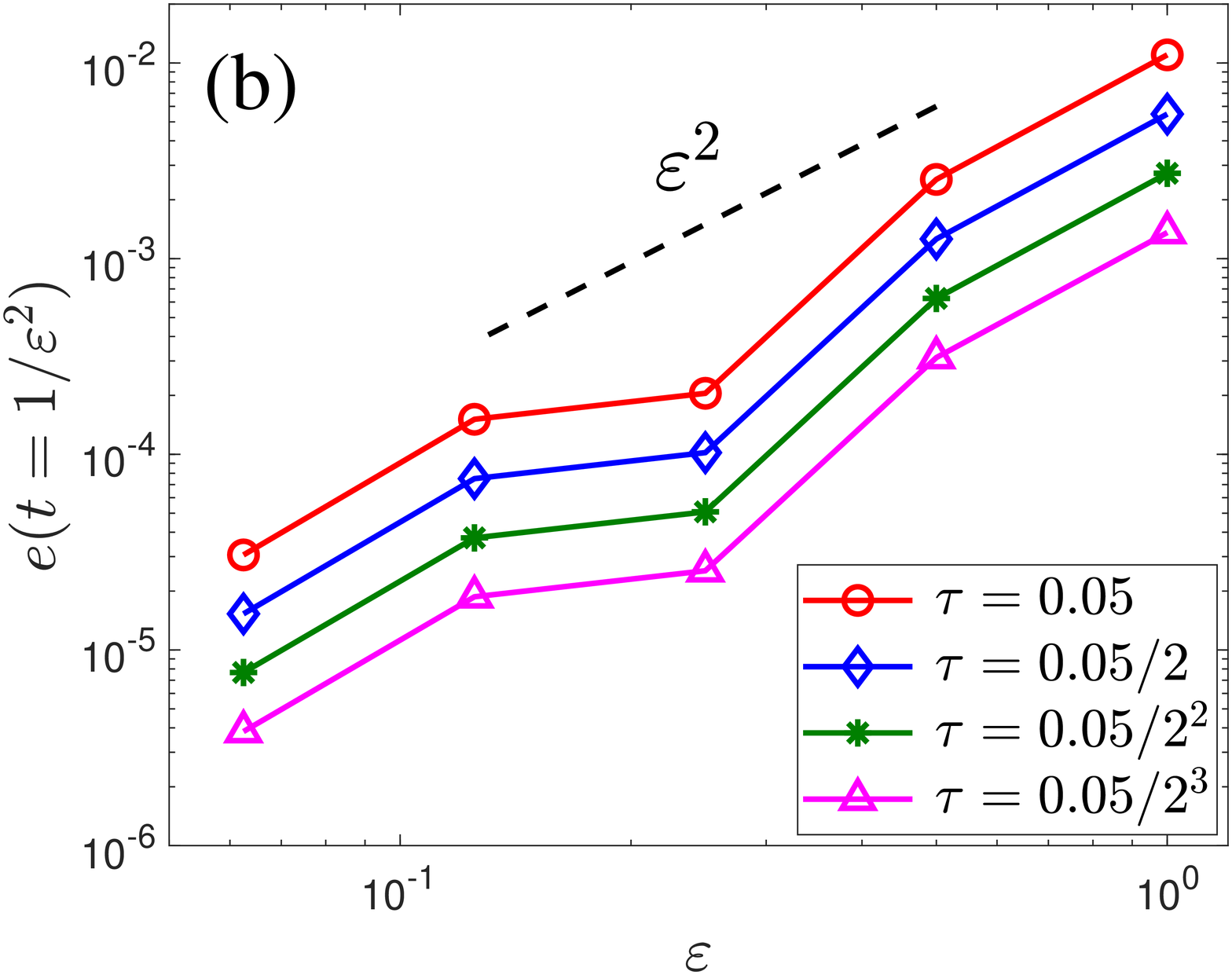}}
\end{minipage}
\caption{Long-time temporal errors of the LEI-FP method for the SGE \eqref{eq:WNE} in 2D at $t = 1/\eps^2$: (a) convergence rates in $\tau$, and (b) convergences rate in $\eps$.}
\label{fig:2D_temporal}
\end{figure}

 Fig. \ref{fig:2D_temporal} presents the long-time temporal errors of the LEI-FP method for the SGE \eqref{eq:WNE} in 2D up to the time at $O(1/\eps^2)$ in the domain with irrational aspect ratio, which indicates that the LEI-FP method is first-order in time and the improved uniform error bound behaves like $O(\eps^2\tau)$ up to the time at $O(1/\eps^2)$. The numerical result indicates that the improved uniform error bound is also suitable for the higher dimensional domain with irrational aspect ratio.

\subsection{Dynamics of the oscillatory sine--Gordon equation}
In this subsection, we present the numerical result for the oscillatory SGE \eqref{eq:HOE} in 1D to confirm the improved error bound \eqref{eq:HOE_error}. We choose the initial data as
\begin{equation*}
\phi(x) = x^2(x-1)^2+3,\quad \gamma(x) = x(x-1)(2x-1), \quad x \in\Omega= (0, 1). 	
\end{equation*}
Here, the regularity of the initial data is enough to ensure the improved error bound \eqref{eq:HOE_error}. The `exact' solution is obtained numerically by the LEI-FP method with a very fine mesh size $h_e = 1/128$ and a very small time step size $\kappa_e = 10^{-6}$. The spatial mesh size is chosen sufficiently small and we test the temporal errors of the LEI-FP method for the oscillatory SGE \eqref{eq:HOE}. The temporal errors are displayed at $t = 1$ with different $\eps$. 

\begin{table}
\caption{Temporal errors of the LEI-FP method for the oscillatory SGE \eqref{eq:HOE} in 1D.}
\centering
\renewcommand\arraystretch{1.3}
\begin{tabular}{ccccccc}
\hline
$e_1(t = 1)$ &$\tau_0 = 0.1 $ & $\tau_0/4 $ &$\tau_0/4^2 $ & $\tau_0/4^3 $ & $\tau_0/4^4$ & $\tau_0/4^5$  \\
\hline
$\varepsilon_0 = 1$ & \bf{1.82E-1} & 4.72E-2 & 1.19E-2 & 2.99E-3 & 7.45E-4 & 1.85E-4 \\
order & \bf{-} & 0.97 & 0.99 & 1.00 & 1.00 & 1.01 \\
\hline
$\varepsilon_0 / 2 $  & 6.97E-2 & \bf{1.79E-2} & 4.51E-3 & 1.13E-3 & 2.82E-4 & 6.99E-5 \\
order & -  & \bf{0.98} & 0.99 & 1.00 & 1.00 & 1.01 \\
\hline
$\varepsilon_0 / 2^2 $ & 7.48E-1 & 1.82E-2 & \bf{4.29E-3} & 1.06E-3 & 2.64E-4 & 6.55E-5 \\
order & -  & 2.68 & \bf{1.04} & 1.01 & 1.00 & 1.01 \\
\hline
$\varepsilon_0 / 2^3 $ & 4.80 & 3.32E-1 & 8.52E-3 & \bf{2.21E-3} & 5.59E-4 & 1.39E-4 \\
order & -  & 1.93 & 2.64  & \bf{0.97} & 0.99 & 1.00 \\
\hline
$\varepsilon_0 / 2^4$ & 6.53E-1 & 5.10E-1 & 6.41E-2 & 3.17E-3 & \bf{8.11E-4} & 2.03E-4 \\
order & - & 0.18 & 1.50 & 2.17 & \bf{0.98} & 1.00 \\
\hline
$\varepsilon_0 / 2^5$ & 2.32E-1 & 5.54E-2 & 5.23E-2 & 1.62E-2 & 7.01E-4 & {\bf 1.69E-4} \\
order & - & 1.03 & 0.04 & 0.85 & 2.26 &  \bf{1.03} \\
\hline
\end{tabular}
\label{tab:HOE_tau}
\end{table}

Table \ref{tab:HOE_tau} lists the temporal errors of the LEI-FP method for the oscillatory SGE \eqref{eq:HOE} in 1D, which indicates that the first-order convergence can only be observed when $\kappa\lesssim \eps^2$ (cf. the upper triangle above the diagonal with bold letters) and the temporal errors are independent of $\eps$ under this condition. The numerical results confirm the improved error bound \eqref{eq:HOE_error} and to demonstrate that they are sharp.

\section{Conclusions}
The Lawson-type exponential integrator Fourier pseudospectral (LEI-FP) method was applied to numerically solve the sine--Gordon equation with small initial data or weak nonlinearity. By separating a linear part from the sine function and employing the regularity compensation oscillation (RCO) technique, the improved uniform error bounds for the semi-discretization and full-discretization were carried out at $O(\eps^2\tau)$ and $O(h^m+\eps^2\tau)$, respectively, for the long-time dynamics of the sine--Gordon equation up to the time at $T/\eps^2$ with $T > 0$ fixed. The improved uniform error bounds for the long-time problem are extended to the oscillatory sine--Gordon equation up to the fixed time $T$. Numerical results were presented to confirm the improved error bounds and to demonstrate that they are sharp.

\section*{Acknowledgements}
The authors gratefully acknowledge funding from the European Research Council (ERC) under the European Union's Horizon 2020 research and innovation programme (grant agreement No. 850941).

\bibliographystyle{plain}

\end{document}